\newtheorem{theorem}{Theorem}[section]
\newtheorem{proposition}[theorem]{Proposition}
\newtheorem{corollary}[theorem]{Corollary}
\newtheorem{lemma}[theorem]{Lemma}
\theoremstyle{definition}
\theoremstyle{remark}
\newtheorem{remark}[theorem]{Remark}
\numberwithin{equation}{section}
\numberwithin{equation}{section}
\newcommand{\scal}[1]{\langle #1 \rangle}
\newcommand{\virg}[1]{``#1''}
\newcommand{\RR}{\mathbb{R}}
\newcommand{\CC}{\mathbb{C}}
\newcommand{\KK}{\mathbb{K}}
\DeclareMathOperator{\dist}{dist}
\DeclareMathOperator{\Tub}{Tub}
\newcommand{\sphere}{\mathrm{\mathbb{S}}}
\newcommand{\tub}{\ensuremath{\mathrm{Tub}}}
\newcommand{\F}{\ensuremath{\mathcal{F}}}
\newcommand{\DR}{\ensuremath{r}}
\newcommand{\FD}{\ensuremath{ f} }
\newcommand{\fol}{\mathcal{F}}
\newcommand{\Sym}{\mathrm{Sym}}
\def\ol{\overline}
\def\In{\subseteq}
\def\tr{\textrm{tr}}
\def\CC{\mathbb{C}}
\def\HH{\mathbb{H}}
\def\PP{\mathbb{P}}
\def\RR{\mathbb{R}}
\def\mc{\mathcal}
\def\fol{\mc{F}}
\begin{document}



\title[MCF of Singular Riemannian Foliations]{Mean curvature flow of singular Riemannian foliations}


\author[Alexandrino]{Marcos M. Alexandrino}

\author[M. Radeschi]{Marco Radeschi}

\address[Alexandrino]{
Universidade de S\~{a}o Paulo, 
Instituto de Matem\'{a}tica e Estat\'{\i}stica,
 Rua do Mat\~{a}o 1010,05508 090 S\~{a}o Paulo, Brazil}
\email{marcosmalex@yahoo.de, malex@ime.usp.br}

\address[Radeschi]{Mathematisches Institut, WWU M\"unster, Germany.}

\email{mrade\_02@uni-muenster.de}

\thanks{The first author was  supported by CNPq and partially supported by FAPESP. The second author is part of SFB 878: Groups, Geometry \& Actions at M\"unster University.}

\date{\today}


\subjclass[2000]{}
\keywords{Mean curvature flow, isometric actions, foliations}


\begin{abstract}

Given a singular Riemannian foliation on a compact Riemannian manifold, we study the mean curvature flow equation with a regular leaf as initial datum. We prove that if the leaves are compact and the mean curvature vector field is basic, then  any finite time singularity is a singular leaf, and the singularity is of type I.
These results generalize previous results of Liu and Terng,  Pacini and Koike.
In particular our results can be applied to   partitions of  Riemannian manifolds into orbits of actions of compact groups of isometries.

 \end{abstract}

\maketitle




\section{Introduction}

Given a Riemannian manifold $M$ and an immersion $\varphi:L_0\to M$,  a smooth  family of immersions $\varphi_t: L_{0}\to M$,
$t\in[0,T)$  is called a solution of the 
\emph{mean curvature flow} (MCF for short) if $\varphi_t$ satisfies the evolution equation
$$\frac{d}{d t} \varphi_{t}= H(t),\qquad  $$
where $H(t)$ is the mean curvature of $L(t):=\varphi_{t}(L_{0})$. We say that the MCF $\varphi_t$ has \emph{initial datum} $L_{0}$. By abuse of notation, we will often identify $\varphi_t$ with its image $L(t)$, and we will talk about the MCF flow $L(t)$.

In  \cite{Liu-Terng} Liu and Terng studied the mean curvature flow equation in spheres and Euclidean spaces with an \emph{isoparametric submanifold} as initial datum  and they proved, among other things, that such an evolution moves through isoparametric submanifolds up to the (finite time) singularity. Later on, Koike \cite{Koike} generalized Liu and Terng's results to the mean curvature flow on compact symmetric spaces, with isoparametric submanifolds with flat sections as initial datum.

Given an isoparametric submanifold $L$, one can partition the ambient manifold into the submanifolds \virg{parallel} to $L$, which are all isoparametric unless they lie in the focal set of $L$, in which case they have lower dimension (and they are called \emph{focal submanifolds}). Such a partition is a special example of a \emph{singular Riemannian foliation} 
 i.e., a foliation where every geodesic starting perpendicular to a leaf, stay perpendicular to all the leaves it meets (cf.  \cite[page 189]{Molino}). The results of Terng-Liu and Koike can then be restated by saying that given an isoparametric submanifold $L$ of a sphere, Euclidean space or compact symmetric space, the MCF evolution with $L$ as initial datum moves through the leaves of the foliation induced by $L$.

Singular Riemannian foliations induced by an isoparametric submanifold (also called \emph{isoparametric foliations}) are characterized by the following two properties:
\begin{enumerate}
\item[i)] The mean curvature form is \emph{basic} (cf. Section \ref{S:preliminaries}).
\item[ii)] The distribution orthogonal to the regular leaves (i.e., the leaves with maximal dimension) is integrable.
\end{enumerate}
If a singular Riemannian foliation satisfies the former condition it is called \emph{generalized isoparametric}.
In this paper, we generalize the results of Terng-Liu and Koike to the class of generalized isoparametric foliations on compact Riemannian manifolds.

Despite the name, generalized isoparametric foliations are much more general than isoparametric ones.
For example, the following foliations are generalized isoparametric:
\begin{enumerate}
\item Any isometric group action of a connected Lie group $G$ on a Riemannian manifold $M$ induces a singular Riemannian foliation $(M,\fol)$ given by the orbits of $G$ (\emph{homogeneous foliation}) which is generalized isoparametric. By comparison, isoparametric foliations only appear when the group action is polar.
\item Any singular Riemannian foliation in spheres or Euclidean space is generalized isoparametric; cf. \cite[Remark 4.2]{AlexandrinoRadeschi-I}. This includes a new class of foliations, neither homogeneous nor polar, constructed using Clifford algebras; cf. \cite{Radeschi-Clifford}. By contrast, (irreducible) isoparametric foliations in spheres either have cohomogeneity one, or arise from a polar representation \cite{Thorbergsson}.
\item Any singular Riemannian foliation $\fol$ on $\KK\PP^n$, ($\KK=\RR,\CC,\HH$) lifts to a foliation $\fol^*$ on a sphere via the Hopf map $\sphere^{m}\to \KK\PP^n$. Moreover, since Hopf fibrations have totally geodesic fibers the mean curvature vector field of the leaves is preserved under the fibration, and in particular $\fol$ is generalized isoparametric. Among these, the (irreducible) isoparametric foliations in $\CC\PP^n$ were recently classified by Dom\'{i}nguez-V\'{a}zquez \cite{Vazquez} (in this case, there are irreducible inhomogeneous isoparametric foliations if and only if $n+1$ is prime).
\end{enumerate}

Recall that a leaf of a singular Riemannian foliation is called \emph{regular} if it has maximal dimension, and \emph{singular} otherwise.
\begin{theorem}
\label{theorem-main}
Let $(M,\fol)$ be a generalized isoparametric foliation with compact leaves on a compact manifold $M$. Let $L_0\in \fol$ be a regular leaf of $M$ and let $L(t)$ denote the mean curvature flow evolution of $L_0$ with maximal interval of existence $[0,T)$. Then the following statements hold:
\begin{enumerate}[(a)]
\item$L(t)$, $t\in [0,T)$ are regular leaves of $\fol$.
\item If $T<\infty$, then $L(t)$ converges to a singular leaf $L_T$  of $\fol$ and the singularity is of type I, i.e.,
\[
\limsup_{t\to T^-}\|A_t \|_{\infty}^2(T-t)<\infty,
\]
where $\|A_t \|_{\infty}$ is the sup norm of the second fundamental form of $L(t)$.
\end{enumerate}
\end{theorem}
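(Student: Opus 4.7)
Since the foliation is generalized isoparametric, the mean curvature vector field of the regular leaves is basic and hence defines a smooth vector field $\tilde H$ on the regular stratum $M^{reg}$ whose flow $\Phi_t$ maps leaves of $\fol|_{M^{reg}}$ to leaves. The MCF equation $\dot\varphi_t = H(t)$ can be rewritten as $\dot\varphi_t = \tilde H \circ \varphi_t$; by uniqueness of solutions to this ODE, $\varphi_t = \Phi_t|_{L_0}$. In particular, $L(t) = \Phi_t(L_0)$ is a leaf of $\fol$ of the same dimension as $L_0$, hence regular, for every $t \in [0, T)$.

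\textbf{Part (b), convergence to a singular leaf.} The trajectory $t \mapsto [L(t)]$ descends to a continuous curve $c(t)$ in the leaf space $B := M/\fol$, which is compact Hausdorff because $M$ and the leaves are compact. If every limit point of $c$ as $t \to T^-$ lay in the regular stratum $B^{reg}$, short-time existence for the MCF restarted at such a limit would extend the flow past $T$, contradicting the maximality of $T$. Hence $c(t)$ approaches the singular stratum; uniqueness of the limit follows from the strict monotonicity of volume along MCF, which is constant on leaves and therefore descends to a Lyapunov function on $B$.

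\textbf{Part (b), type I singularity.} Fix a point $x \in L_T$ and invoke the Slice Theorem for singular Riemannian foliations: a neighborhood of $x$ in $M$ is, up to foliated diffeomorphism, the product of an open subset of $L_T$ with the normal slice $\Sigma_x \subset T_x L_T^\perp$ equipped with the \emph{infinitesimal foliation} $\fol_x$. Two features of $\fol_x$ are crucial: the radial scaling $p \mapsto \lambda p$ maps leaves of $\fol_x$ to leaves of $\fol_x$, and the Euclidean distance $\rho$ to the origin is basic for $\fol_x$, so every regular leaf is contained in a sphere of radius $\rho$ around $0$. Combining these, the mean curvature of the regular leaf at radius $\rho$ has leading radial component $H_\rho \sim -(d/\rho)\,\partial_\rho$, where $d$ is the dimension of the leaf inside the slice, while the second fundamental form scales as $\|A\|^2 = O(\rho^{-2})$. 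Tracking the radial coordinate of $L(t)$ under the MCF then yields the ODE $\dot\rho(t) \sim -d/\rho(t)$, giving $\rho(t)^2 \sim 2d\,(T-t)$ and consequently $\|A_t\|_\infty^2\,(T-t) \le C$, which is the type I bound. The main obstacle is to justify rigorously that the MCF on $(M, \fol)$ near $L_T$ is well-approximated by this flat-slice model, by controlling the discrepancy between the tubular neighborhood metric on $\Tub(L_T)$ and the Euclidean product metric on $L_T \times \Sigma_x$ and showing those error terms are of lower order in $\rho$ than the leading radial behavior.
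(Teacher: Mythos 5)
Part (a) is correct and matches the paper's argument: the basic mean curvature field descends to the quotient and the MCF follows its integral curve through regular leaves.

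For part (b) there are two genuine gaps. First, your uniqueness-of-limit argument via the volume as a Lyapunov function does not work: strict monotonicity of $\mathrm{vol}$ along the flow only shows the $\omega$-limit set lies in a level set of $\mathrm{vol}$, and since the singular part of $M/\fol$ can be positive-dimensional (and the volume function degenerates to $0$ uniformly there), the limit set could a priori be a continuum of singular leaves. The paper instead proves a quantitative containment result (Proposition~\ref{P:r_t}): once $L(t)$ enters a sufficiently small tube $\tub_\epsilon(L_q)$ around a singular leaf, the radial distance $r(t)$ is strictly decreasing with two-sided quadratic bounds $C_1^2(t-t_0)\leq r^2(t_0)-r^2(t)\leq C_2^2(t-t_0)$, so $L(t)$ never leaves the tube; combined with the fact that the limit set meets the singular stratum, this forces Hausdorff convergence to a single $L_q$.

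Second, the type I estimate. Your scaling heuristic on the infinitesimal foliation $\fol_x$ is the right intuition, and you correctly identify that the gap is to make the flat-slice approximation rigorous — but the more serious issue is that the estimate $\|A\|^2=O(\rho^{-2})$ is not a consequence of radial scaling alone. Homothety invariance gives $\|A\|\sim\rho^{-1}$ only with a constant depending on the direction, and that constant blows up as the leaf approaches other singular strata in the slice. The paper must therefore prove two non-trivial facts: (i) there is a constant $C$ with $r_\Sigma(t)\geq C\,r(t)$, where $r_\Sigma$ is the distance to the full singular set (Corollary~\ref{C:bound_r_Sigma}); in other words, the flow approaches $L_q$ inside a cone that avoids the other strata; and (ii) the focal distance $f$ satisfies $f(p)\geq\sigma\,r_\Sigma(p)$ (Proposition~\ref{P:bound f}), which is proved via Rauch comparison together with the curvature blow-up estimate $\sup\sec_{M/\fol}\leq K/r_\Sigma^2$ of Lytchak--Thorbergsson. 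Only after combining these with $\|\ol A_t\|_\infty=1/\ol f(t)$ (Lemma~\ref{focal}) and the $\sqrt{T-t}$ lower bound on $r(t)$ from Proposition~\ref{P:r_t} does one obtain $\|A_t\|_\infty^2(T-t)\leq C$. These estimates constitute the bulk of Section~\ref{S:technical}, and they are not subsumed by the flat-slice scaling argument; without (i) in particular, the claimed $O(\rho^{-2})$ bound on $\|A\|^2$ is false along trajectories that approach $L_q$ tangentially to another stratum.
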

\begin{remark}

The condition that $M$ is compact can be replaced by the more general assumption that the MCF $L(t)$ with initial datum $L(0)=L_{0}$ stays
at a bounded distance from $L_0$. This condition can be verified, for example, for any closed singular Riemannian foliation in Euclidean space.
\end{remark}

The condition of having a finite time singularity holds in several situations. For example, if $L_0$ is  a compact submanifold in Euclidean space, it follows from \cite[Proposition 3.10]{Smoczyk-survey} that the maximal time of existence $T$ of the mean curvature flow is finite. For any generalized isoparametric foliation, we prove in Proposition \ref{P:r_t} that there is a neighbourhood around the singular leaves in which the MCF has always finite time singularities. If moreover the manifold is nonnegatively curved and the foliation is isoparametric, then we have the following stronger result.

\begin{theorem}\label{T:finite-time-sing}
For every isoparametric foliation  on a compact nonnegatively curved space, the MCF with a regular leaf as initial datum has always finite time singularity.
\end{theorem}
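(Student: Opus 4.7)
The plan is a proof by contradiction: assume $T=\infty$, and derive a contradiction by combining Proposition~\ref{P:r_t} with the descent of the MCF to a finite-dimensional ODE on a section.

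\emph{Setup and descent.} Since $\fol$ is isoparametric, through each regular point of $M$ passes a section $\Sigma$, a totally geodesic submanifold meeting every regular leaf orthogonally. Compactness of $M$ implies $\Sigma$ is compact. By Theorem~\ref{theorem-main}(a), $L(t)$ remains a regular leaf of $\fol$ for $t\in[0,T)$, and corresponds to a curve $\xi(t)$ in the regular stratum $\Sigma_{\mathrm{reg}}$. Basicness of $H$ implies the MCF projects to the ODE $\dot\xi(t)=H(\xi(t))$ on $\Sigma_{\mathrm{reg}}$. Let $U$ be the neighbourhood of the union of singular leaves supplied by Proposition~\ref{P:r_t}: under $T=\infty$, the curve $\xi(t)$ must stay in the compact set $K:=\Sigma_{\mathrm{reg}}\setminus U$ for all $t\ge 0$, since otherwise Proposition~\ref{P:r_t} would give $T<\infty$.

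\emph{Limiting behaviour via LaSalle.} The volume $V(t):=\vol(L(t))$ satisfies $V'(t)=-\int_{L(t)}|H|^2\,\mathrm{dvol}=-|H(\xi(t))|^2\,V(t)$, where the second equality uses that $H$ is basic. On the compact set $K$, $V$ is bounded below by a positive constant, so $V$ is a strict Lyapunov function for the descended flow, and LaSalle's invariance principle forces the $\omega$-limit set of $\xi(t)$ to consist of critical points of $H$ in $K$, i.e., of points $\xi_\infty\in K$ such that $L_{\xi_\infty}$ is a minimal regular leaf of $\fol$ lying in $M\setminus U$.

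\emph{Non-negative curvature input and main obstacle.} The final step uses the non-negative sectional curvature to rule out such an $\omega$-limit (modulo the trivial case where $L_0$ is already a minimal regular leaf, which is either excluded from the statement or interpreted as a degenerate singularity at time zero). The idea is to exploit the Coxeter-type reflection structure governing the intersections of leaves with $\Sigma$ together with the explicit product formula for $\vol(L_\xi)$ in terms of focal distances to the walls of this arrangement; combined with the sign of the ambient sectional curvature, this should imply that every interior minimal regular leaf is a strict saddle of the functional $\xi\mapsto\vol(L_\xi)$, so that no trajectory with non-minimal initial datum can $\omega$-accumulate there. I expect the main technical obstacle to be carrying out this instability analysis for a general isoparametric foliation in non-negative curvature: it requires both the rigidity coming from isoparametricity and a Jacobi-field-based Hessian computation exploiting $\sec\ge 0$.
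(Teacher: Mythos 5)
Your Lyapunov-function setup (volume decreasing as $-\int_{L(t)}|H|^2$, accumulation of $\pi(L(t))$ at a minimal regular leaf $L'$ when $T=\infty$, confinement away from the singular set via Proposition~\ref{P:r_t}) matches the paper's Lemma~\ref{L:no-finite-time-singularity} part~(2) closely, modulo working on a section rather than on $M/\fol$. However, the final step has a genuine gap, and moreover the geometric claim on which it rests is the opposite of what is true.

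You propose to show that every minimal regular leaf is a \emph{strict saddle} of $\xi\mapsto\vol(L_\xi)$, and conclude that ``no trajectory with non-minimal initial datum can $\omega$-accumulate there.'' Both halves are problematic. First, the logical step fails even if the Morse picture were correct: the negative gradient flow of $\log\vol$ certainly can (and generically does, along the stable manifold) converge to a strict saddle, so saddle behaviour by itself does not preclude accumulation. Second, the Morse picture is wrong: in nonnegative curvature, minimal regular leaves of a polar foliation are \emph{local maxima} of $\vol$ among leaves, not saddles. The paper proves exactly this (the proposition inside the proof of Theorem~\ref{P:polar-fol}), by comparing the holonomy Jacobi determinant $j_s(q)$ with the Euclidean model $\ol{j}_s(q)=(-1)^n(\det A_{X(q)})\prod_i(s-\lambda_i(q)^{-1})$, which has a local maximum at $s=0$ of value $1$; with $\sec_M\ge 0$ one gets $j_s(q)\le\ol{j}_s(q)\le 1$ on a definite interval, hence $\vol(L_{\gamma_X(s)})\le\vol(L')$ for small $s$. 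This is then in direct contradiction with Lemma~\ref{L:no-finite-time-singularity}(2), which produces nearby leaves $L_i$ with $\vol(L_i)>\vol(L')$ (strict, because $\vol$ is strictly decreasing along the flow of a non-minimal initial leaf). The ``local maximum'' conclusion is precisely what a Lyapunov/gradient argument needs, because the only trajectory converging to a local maximum of the Lyapunov function is the constant one. The Coxeter-chamber/focal-distance product formula you invoke is available in $\RR^n$ and $\sphere^n$ but is not at hand for a general compact nonnegatively curved ambient, which is why the paper replaces it with the Jacobi-field comparison à la Eschenburg.

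Finally, your parenthetical about the minimal initial leaf is correct and should be made explicit: if $L_0$ is minimal the flow is stationary and $T=\infty$, so the theorem as stated implicitly assumes $L_0$ non-minimal (the paper's restatement as Theorem~\ref{P:polar-fol} includes this hypothesis).
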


If we restrict ourselves to special cases, we obtain strengthenings   of different already known results:
\begin{itemize}
\item Theorems \ref{theorem-main} and \ref{T:finite-time-sing} together generalize the main results of Terng-Liu \cite{Liu-Terng} and Koike \cite{Koike} to the case of isoparametric foliations on compact nonnegatively curved manifolds. Moreover, we also show that the flow has type I singularities independently of the singular leaf to which it converges, thus answering in the positive a question posed in  \cite[Remark]{Liu-Terng}.

\item If $\fol$ is a homogeneous foliation by a Lie group $G$ acting on $M$, Pacini  proved in  \cite[Theorem 2]{Pacini}, among other things, that if a curve of principal orbits $t\to G(t) $ (where $t\in [0,T)$ ) is a solution of a MCF with finite time singularity, the limit is a singular orbit provided that such a limit exists. By our main result, such a limit always exists.
\end{itemize}

Like in the case of orbits of isometric actions in compact manifolds (cf. \cite{Pacini}), it is possible to prove that for generalized isoparametric  foliations the mean curvature of a singular leaf is tangent to the stratum that contains it; see \cite[Prop. 2.10]{Radeschi-Lowdimensional} for the case of singular Riemannian foliations in spheres. Moreover it is possible to check, using for example \cite[Prop. 2.9]{Radeschi-Lowdimensional}, that  the mean curvature  is again basic in each stratum.

Therefore the restriction of a generalized isoparametric foliation to each stratum is again a generalized isoparametric foliation, and we immediately get the following result.

\begin{corollary}
Let $(M,\fol)$ be a generalized isoparametric  foliation with compact leaves on a compact manifold $M$. Let $L$ be a singular leaf and let $\Sigma$ be the stratum containing $L$. Let $L(t)$ be the MCF flow with initial datum $L$ and let $[0,T)$ be the maximal interval of existence of the flow. Then the following statements hold.
\begin{enumerate}[(a)]
\item  $L(t)$ is a singular leaf in $\Sigma$ for every $t\in [0,T)$.
\item If $T<\infty$ then $L_T$ converges to a singular leaf $L_T$ with $\dim L_T<\dim L$, and the singularity is of type I.
\end{enumerate}
\end{corollary}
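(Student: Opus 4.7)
The plan is to apply Theorem~\ref{theorem-main} to the restricted foliation $(\Sigma,\fol|_\Sigma)$. By the discussion immediately preceding the corollary, $\fol|_\Sigma$ is a generalized isoparametric foliation with compact leaves. The key observation is that, although $L$ is singular for $\fol$, it is a \emph{regular} leaf of $\fol|_\Sigma$: by definition of stratum, every leaf of $\fol$ contained in $\Sigma$ has dimension equal to $\dim L$, so this is the maximal leaf dimension of the restricted foliation.

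Before invoking the theorem I would verify that the MCF of $L$ in $M$ coincides with the intrinsic MCF of $L$ inside $\Sigma$. Since the mean curvature $H_{L\subset M}$ of any leaf of $\fol$ lying in $\Sigma$ is tangent to $\Sigma$ (as recalled just before the corollary), and since $H_{L\subset M}=H_{L\subset\Sigma}+\tr\bigl(\mathrm{II}^{\Sigma\subset M}|_{TL}\bigr)$ splits as a sum of a $\Sigma$-tangential and a $\Sigma$-normal term, the normal summand must vanish and we obtain $H_{L\subset M}=H_{L\subset\Sigma}$. Hence the two flows satisfy the same ODE with the same initial datum and agree by uniqueness. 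The compactness of $M$ ensures that $L(t)$ stays within bounded ambient distance of $L$, so the remark after Theorem~\ref{theorem-main} allows us to apply the theorem despite the non-compactness of $\Sigma$. Part~(a) then follows at once: $L(t)$ is a regular leaf of $\fol|_\Sigma$, i.e., a leaf of $\fol$ contained in $\Sigma$, and hence a singular leaf of $\fol$ in $\Sigma$.

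The main step remaining is part~(b), and the point where one has to work a little is in identifying the limit leaf. Theorem~\ref{theorem-main}(b) produces a type~I singularity and a limit leaf $L_T\subset\overline{\Sigma}$. The limit cannot lie in $\Sigma$ itself: otherwise $L_T$ would be a regular leaf of $\fol|_\Sigma$, and short-time existence of MCF around a smooth compact submanifold would extend the flow past $T$, contradicting maximality. Therefore $L_T\subset\overline{\Sigma}\setminus\Sigma$, which is a union of strata of strictly smaller dimension (the set of points whose leaf has dimension $\ge k$ is open in $M$, so boundary leaves have strictly smaller dimension). This yields $\dim L_T<\dim L$.

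The subtlest point I anticipate is transferring the type~I estimate from $L(t)\subset\Sigma$ to $L(t)\subset M$, since the conclusion of Theorem~\ref{theorem-main}(b) applied to $\fol|_\Sigma$ bounds the second fundamental form of $L(t)$ in $\Sigma$, while the statement of the corollary is phrased in terms of its second fundamental form in $M$. The two differ by $\mathrm{II}^{\Sigma\subset M}|_{TL(t)}$, and the transfer reduces to controlling this term along the flow; this is a local matter and should cause no genuine difficulty, but it is the one place where a careful check is needed beyond the verbatim application of Theorem~\ref{theorem-main}.
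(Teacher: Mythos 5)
Your proposal is correct and is exactly the argument the paper has in mind: the paragraph preceding the corollary establishes that $\fol|_\Sigma$ is again generalized isoparametric with $L$ regular and $H$ tangent to $\Sigma$, so the flow is intrinsic to $\Sigma$, and the authors declare the result to follow immediately from Theorem~\ref{theorem-main}. Your identification of $L_T\subset\overline{\Sigma}\setminus\Sigma$ via lower semicontinuity of leaf dimension is the right way to make $\dim L_T<\dim L$ precise.

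On the point you flag at the end: the discrepancy $\mathrm{II}^{\Sigma\subset M}|_{TL(t)}$ does blow up, but only at the type~I rate, so the transfer is unproblematic. In a distinguished tubular neighbourhood around a point $q\in L_q$ the image $\varphi(\Sigma)$ is invariant under the homotheties $h_\lambda$, hence splits as $T_qL_q\times(\text{cone in }\nu_qL_q)$; so in the flat comparison metric $\|\ol{\mathrm{II}}^{\Sigma\subset M}\|=O(1/\ol{r})$, the bounded connection-difference tensor $\omega$ preserves this bound for $g$, and since $r(t)\sim\sqrt{T-t}$ by Proposition~\ref{P:r_t}, the extra term contributes $O\bigl((T-t)^{-1/2}\bigr)$ to $\|A_t\|_\infty$, which is compatible with \eqref{equation-TypeI}.
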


This paper is divided as follows. In Section \ref{S:preliminaries} we recall the definition and properties of singular Riemannian foliation, while in Section \ref{section-main-proofs} we prove Theorem \ref{theorem-main}. The proofs rely on some, somewhat technical, estimates on the shape operator, which are proved in Section \ref{S:technical}. Section \ref{section-polarfoliation-nonnegative} is devoted to the proof of Theorem \ref{T:finite-time-sing}.

\section{Preliminaries}\label{S:preliminaries}

Given a compact Riemannian manifold $M$, a singular foliation $\fol$ is called \emph{singular Riemannian foliation} if every geodesic that starts perpendicular to a leaf, stays perpendicular to all the leaves it meets; cf.  \cite[page 189]{Molino}. We denote by $\dim \fol$ the maximal dimension of the leaves of $\fol$, and call a leaf $L$ \emph{regular} if $\dim L=\dim \fol$ and \emph{singular} otherwise. The union of regular leaves is open and dense in $M$, it is called \emph{regular stratum} and it is denoted by $M_{reg}$. The union of singular leaves of a fixed dimension is a disjoint union of (possibly non complete) submanifolds, which we call \emph{singular strata} of $\fol$.

If the leaves of $\fol$ are closed, the \emph{leaf space} $M/\fol$ inherits the structure of a Hausdorff metric space, where the distance between two points is defined as the distance between the corresponding leaves. Moreover, the subset $M_{reg}/\fol$ of regular leaves is an orbifold, and the canonical map $\pi:M\to M/\fol$ restricts to a (orbifold) Riemannian submersion $M_{reg}\to M_{reg}/\fol$. A vector field $X$ on $M_{reg}$ is called \emph{basic} if it projects via $\pi_*$ to a well-defined vector field in $M_{reg}/\fol$.

Given a singular Riemannian foliation $(M, \fol)$, we denote by $A$ the shape operator of the leaves of $\fol$.
The \emph{mean curvature} $H$ of $\fol$ at a point $p$ is defined as the mean curvature of the leaf $L_p$ through $p$. Since the regular part of the foliation is defined via a Riemannian submersion, the mean curvature $H$ is smooth on $M_{reg}$. We will see, however, that  the norm of $H$ blows up as it approaches singular strata.

In the regular part of $\fol$ the tangent bundle $TM$ splits as $T\fol\oplus \nu \fol$,
where $T\fol$ is the bundle of the tangent spaces of the leaves in $\fol$, and $\nu\fol$ is its othogonal complement. Moreover, for every regular point $p\in M$ it is possible to define the O'Neill tensor $ON:\nu_p \fol\times \nu_p\fol\to T_p\fol$ as $(x,y)\mapsto ON_x\,y=\frac{1}{ 2}\mathrm{pr}_{T\fol}[X,Y]$ where
$X,Y$ are local vector fields extending $x,y$, and $\mathrm{pr}_{T\fol}$ denotes the orthogonal projection onto $T_p\fol$. If $ON\equiv0$ then the orthogonal distribution $\nu\fol$ is integrable, and the foliation is called \emph{polar}.

A singular Riemannian foliation is called \emph{generalized isoparametric} if the mean curvature $H$  on $M_{reg}$  is basic. If moreover it is also polar, then it is called \emph{isoparametric}.

\subsection{Distinguished tubular neighbourhoods}\label{SS:distinguished tub neighb}
Let $(M,\fol)$ be a singular Riemannian foliation and let $q$ be a point in a (possibly singular) leaf $L$.
We recall (cf. \cite[Theorem 6.1, Proposition 6.5]{Molino}, \cite{AlexBriquetToeben}, \cite{LytchakThorbergsson}) that it is possible to find a neighbourhood $P$ of $q$ in $L$, a cylindrical neighbourhood $O_{\epsilon}=Tub_{\epsilon}(P)$ of $q$ in $M$ and diffeomorphism $\varphi:O_{\epsilon}\to V\In T_qM$ onto a neighbourhood $V$ of the origin, such that:
\begin{enumerate}
\item The image of $\fol|_{O_\epsilon}$ under $\varphi$ is the restriction to $V$ of a singular Riemannian foliation $(T_qM,\fol_q)$.
\item $\varphi(L\cap O_{\epsilon})=T_qL\cap V$.
\item Under the splitting $T_{q}M=T_{q}L\times \nu_{q}L$ the foliation $\fol_{q}$ splits as well as  $T_{q}L\times (\nu_{q}L,\fol_{q}\cap\nu_{q}L)$, i.e., any leaf $L'$ of $\fol_{q}$ is of the form $(L'\cap \nu_{q}L)\times T_{q}L$.
\item For any $\lambda\in (0,1)$, the \emph{homothetic transformation} $h_{\lambda}:O_{\epsilon}\to O_{\epsilon}$, $h_{\lambda}(\exp_pv)=\exp_p\lambda v$ corresponds, under $\varphi$, to the rescaling $(v,x)\mapsto (v,\lambda x)$ for any $(v,x)\in T_{q}L\times \nu_qL$.
\end{enumerate}
We call $O_{\epsilon}$ a \emph{distinguished tubular neighbourhood} of $q$, and denote it simply with $O$ if $\epsilon$ is understood.
The map $\varphi$ is a modification of the normal exponential map and in particular it sends radial geodesics around $L$ to radial  geodesics around $T_{q}L$.


\section{Proof theorem \ref{theorem-main}}
\label{section-main-proofs}

In this section we let $(M,\fol)$ be a generalized isoparametric foliation with closed leaves on a compact manifold. We also fix a regular leaf $L_0$ and  assume that the MCF evolution $L(t)$ with initial datum $L_0$ has maximal interval of existence $[0,T)$, with $T$ finite.

Since the mean curvature of $\fol$ is basic, it projects via $\pi:M\to M/\fol$ to a vector field on the orbifold $M_{reg}/\fol$, and it is immediate to see that $L(t)$ is the preimage of the point $\gamma(t)\in M_{reg}/\fol$, where $\gamma$ is the integral curve of (the projection of) $H$. In particular $L(t)$ is a leaf of $\fol$, and since the dimension of $L(t)$ is constant up to the singular time, we have that $L(t)$ is regular.
We thus proved the following.
\begin{proposition}\label{P:recall}
For any $t\in [0,T)$, $L(t)$ is a regular leaf of $\fol$.
\end{proposition}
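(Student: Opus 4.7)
The plan is to elaborate the sketch already indicated by the authors: to recognise the MCF evolution as the flow of the globally defined mean curvature vector field $H$ on $M_{reg}$, and then to use that this vector field is basic. More precisely, let $t^{\ast}\in (0,T]$ be the supremum of those times $t$ for which $L(s)$ is a regular leaf of $\fol$ for every $s<t$. Since $L_{0}$ is a regular leaf and $M_{reg}$ is open, $t^{\ast}>0$. For any $s\in[0,t^{\ast})$ the leaf $L(s)$ is regular, so the mean curvature vector of the immersion $\varphi_{s}$ at a point $q=\varphi_{s}(x)$ coincides with the value $H(q)$ of the globally defined mean curvature vector field of $\fol$ on $M_{reg}$. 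Consequently the evolution equation reads
\[ \frac{d}{ds}\varphi_{s}(x)=H(\varphi_{s}(x)),\qquad x\in L_{0}, \]
which is the ODE of the flow $\Phi_{s}^{H}$ of $H$; hence $\varphi_{s}=\Phi_{s}^{H}|_{L_{0}}$.

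Next, because $H$ is basic, it is $\pi$-related to a smooth vector field $\bar{H}$ on the orbifold $M_{reg}/\fol$. Thus the flow of $H$ commutes with $\pi$, so $\pi\circ\Phi_{s}^{H}(x)=\gamma(s)$ for all $x\in L_{0}$, where $\gamma$ is the integral curve of $\bar{H}$ starting at $\pi(L_{0})$. This gives the inclusion $L(s)\In\pi^{-1}(\gamma(s))$. Since $\Phi_{s}^{H}$ is a diffeomorphism of its domain, $L(s)$ is a compact submanifold of dimension $\dim L_{0}=\dim\fol$; as $\pi^{-1}(\gamma(s))$ is a connected leaf of $\fol$ and leaves have dimension at most $\dim\fol$, the inclusion is an equality and $L(s)=\pi^{-1}(\gamma(s))$ is a regular leaf.

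It remains to verify $t^{\ast}=T$. Suppose for contradiction $t^{\ast}<T$. Smoothness of the MCF on $[0,T)$ guarantees that $\varphi_{s}\to\varphi_{t^{\ast}}$ smoothly as $s\to t^{\ast}$, so $L(s)\to L(t^{\ast})$ in Hausdorff distance and $\gamma(s)$ converges in $M/\fol$ to $\pi(L(t^{\ast}))$. If $L(t^{\ast})\In M_{reg}$, then applying the previous two steps with $L(t^{\ast})$ as initial datum extends the property of being a regular leaf to times slightly beyond $t^{\ast}$, contradicting maximality. Otherwise $L(t^{\ast})$ touches a singular stratum, but then $|H(s)|$ on $L(s)$ must blow up as $s\to t^{\ast}$ (the norm of $H$ is unbounded near singular strata, as recalled in Section~\ref{S:preliminaries}), contradicting the smoothness of the MCF on $[0,T)$. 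Hence $t^{\ast}=T$ and the proposition follows.

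The only delicate point is the last paragraph, where one must exclude that the MCF reaches a singular stratum strictly before its maximal time of existence; this is governed by the fact that $|H|$ blows up at singular strata, which forces the integral curves of $\bar{H}$ to be inextendible once they approach the boundary of $M_{reg}/\fol$. Everything else is a formal consequence of $H$ being basic, which is precisely the content of the generalized isoparametric hypothesis.
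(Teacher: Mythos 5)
Your argument takes essentially the same route as the paper's: $H$ being basic implies the MCF is the flow of $H$ on $M_{reg}$, so $L(t)=\pi^{-1}(\gamma(t))$ for the integral curve $\gamma$ of $\pi_*H$ on $M_{reg}/\fol$, and constancy of the dimension under an immersion forces regularity. You supply an explicit open-closed continuation argument (using the blow-up of $\|H\|$ near the singular strata) to rule out an exit from $M_{reg}$ strictly before time $T$, a point the paper compresses into the phrase ``the dimension of $L(t)$ is constant up to the singular time.''
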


The rest of the section is devoted to proving statement (b) of Theorem \ref{theorem-main}.
In section \ref{S:technical} we prove the following result (cf. Corollary \ref{corollary-constant-c2}):

\begin{proposition}
Let $L_q\in \fol$ be a singular leaf. For $\epsilon$ small enough, there exist constants $\delta,c$ such that in the regular part of $\tub_{\epsilon}(L_q)$ the shape operator $A$ of $\fol$ satisfies:
\begin{equation}\label{Lemma-bounds_1}
-(1+\delta)\frac{D}{\DR(x)}-c\leq \tr (A_{\nabla\DR})_x\leq -(1-\delta)\frac{D}{\DR(x)}+c,
\end{equation}
where $D=\dim \fol-\dim L_q$ and $r(x)=\dist(x,L_q)$. Moreover, if $\epsilon'<\epsilon$ then one can choose constants $\delta'\leq\delta$ and $c'\leq c$ associated to $\epsilon'$, and $\lim_{\epsilon\to 0^+}\delta=0$.
\end{proposition}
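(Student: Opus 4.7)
The plan is to reduce the global statement to a local computation inside a distinguished tubular neighborhood of a point $q\in L_q$, and then to compare the shape operator of $\fol$ with that of the linearized (``infinitesimal'') foliation $\fol_q$ on $T_qM$, for which the analogous statement holds exactly with $\delta=c=0$. First, since $L_q$ is compact, I would cover it by finitely many distinguished tubular neighborhoods $O_{\epsilon_1}(q_1),\ldots, O_{\epsilon_N}(q_N)$ as in Subsection \ref{SS:distinguished tub neighb}. For $\epsilon$ small enough every point of $\tub_\epsilon(L_q)$ lies in at least one $O_{\epsilon_i}(q_i)$, so it suffices to establish the bound in each such neighborhood with constants $\delta_i,c_i$ and then take the maximum. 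The monotonicity $\delta'\leq\delta$ for $\epsilon'<\epsilon$ will be automatic once the local bounds are phrased as suprema over nested neighborhoods.

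Next, I would analyze the model case. Via the diffeomorphism $\varphi: O_{\epsilon}\to V\subseteq T_qM$, leaves of $\fol$ correspond to leaves of the infinitesimal foliation $\fol_q$, which by property (3) splits as $T_qL_q\times(\fol_q|_{\nu_qL_q})$. The factor $\fol_q|_{\nu_qL_q}$ on $\nu_qL_q\cong\RR^{\codim L_q}$ has $\{0\}$ as a singular leaf and, by property (4), is invariant under the Euclidean dilations $v\mapsto\lambda v$. Since $\{0\}$ is a singular leaf, the SRF condition forces every regular leaf $L'\subseteq\nu_qL_q$ to lie on a sphere centered at the origin, and the radial vector field $\partial_r$ to be everywhere normal to the regular leaves. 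Using the flat Euclidean connection on $T_qM$, the shape operator $A^0$ of a regular leaf of $\fol_q$ in the direction $\partial_r$ reduces on the normal slice to the shape operator of the ambient Euclidean sphere, while it is zero on the $T_qL_q$ factor; explicitly,
\[
A^{0}_{\partial_r}\big|_{\text{normal slice}}=-\frac{1}{r}\Id,\qquad A^{0}_{\partial_r}\big|_{T_qL_q}=0,
\]
so $\tr A^{0}_{\partial_r}=-D/r$ identically on the regular part of $\fol_q$.

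Third, I would transfer the estimate from the flat metric on $V$ to the pulled-back metric $g_q:=(\varphi^{-1})^*g$. The metric $g_q$ agrees with $g_{\mathrm{flat}}$ at the origin, and differs from it by a controlled amount in $C^1$ on any fixed compact piece of $V$. The key tool is a rescaling argument using the homothetic maps $h_\lambda$ from property (4), which send a leaf at distance $s$ to a leaf at distance $\lambda s$. If $g_q$ were exactly homothety-invariant up to the factor $\lambda^2$, the shape operator would scale precisely as $1/s$ and one would recover $-D/s$ with no error whatsoever; the actual deviation $h_\lambda^* g_q-\lambda^2 g_q$ is of higher order in $\lambda$, and after the $1/\lambda^2$ normalization built into the shape operator this produces a multiplicative error of size $O(\epsilon)$ on the leading term, together with additive terms coming from derivatives of the higher-order correction to $g_q$ and from the difference between $\nabla r$ in the metric $g$ and the Euclidean radial field, both of which are uniformly bounded on $\tub_\epsilon(L_q)$.

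The main obstacle is this third step: extracting a genuinely \emph{multiplicative} bound of the form $(1\pm\delta)D/r$ rather than merely an additive one, uniformly as $r\to 0$. This requires tracking how derivatives of $g_q-g_{\mathrm{flat}}$ interact with the radial direction and with vectors tangent to regular leaves whose intrinsic geometry is blowing up like $1/r$, and showing that the non-flat part of the Levi-Civita connection contributes to $\tr A_{\nabla r}$ only at subleading order. Once this scale-sensitive comparison is in place, the final assertions $\lim_{\epsilon\to 0^+}\delta=0$ and the nesting monotonicity follow immediately from the fact that all error terms are bounded by $C^1$-norms of $g_q-g_{\mathrm{flat}}$ on shrinking neighborhoods.
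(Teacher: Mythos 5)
Your plan reproduces the paper's strategy in its essentials: reduce to a distinguished tubular neighborhood, compute exactly in the flat metric pulled back via $\varphi$, and then transfer the estimate to the metric $g$. The model computation in step two is correct (and in fact slightly cleaner than the paper's general lemma): because the infinitesimal foliation $\fol_q$ splits \emph{exactly} as $T_qL_q\times(\fol_q|_{\nu_qL_q})$ and the slice leaves lie on round spheres about the origin, the flat shape operator in the radial direction is exactly $-\tfrac{1}{r}\Id$ on the slice factor and $0$ on the $T_qL_q$ factor, giving $\ol{\tr}\,\ol A_{\ol\nabla\ol r}=-D/r$ with no error. This is precisely the $C_L=0$ case the paper exploits in passing from its flat-metric lemma to Corollary~\ref{corollary-constant-c2}.

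The gap is in your third step, which you yourself flag as ``the main obstacle'' and do not carry out; yet this is exactly where the multiplicative $(1\pm\delta)$ and the additive $c$ in the statement come from. The paper handles it by the comparison inequality \eqref{E:traces-ineq}, derived directly from $\nabla=\ol\nabla+\omega$ and $g(x,y)=\ol g(Gx,y)$: the tensor $G-\Id$ is $O(\epsilon)$ in $C^0$, which perturbs the trace \emph{multiplicatively} by $(1\pm\delta)$, while the connection-difference tensor $\omega$ is $O(1)$ and contributes only the \emph{additive} constant $c$ (crucially, $\nabla r=\ol\nabla\ol r$ because $\varphi$ sends $g$-radial geodesics from $L_q$ to $\ol g$-radial geodesics, so there is no error from the normal direction). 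By contrast, your proposed rescaling route is not yet sound as stated: $h_\lambda$ is anisotropic --- it fixes the $T_qL_q$ directions and scales only $\nu_qL_q$ --- so the claimed smallness of $h_\lambda^*g_q-\lambda^2 g_q$ fails on the $T_qL_q$ block and on the cross terms; one would have to track the two blocks separately and argue that only the $\nu_qL_q$ block feeds the $1/r$ term. Until that is done, the statement's multiplicative bound, and hence the crucial $\lim_{\epsilon\to0^+}\delta=0$, is not established.
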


We can now prove the following:

\begin{proposition}\label{P:r_t}
 Given a singular leaf $L_q$, there exists an $\epsilon=\epsilon(L_q)$ such that if $L(t_{0})$ lies in $\tub_{\epsilon}(L_{q})$ for some $t_0\in [0,T)$ then the following properties hold:

\begin{enumerate}
\item[(a)] For any $t>t_{0}$ the distance $r(t)=\dist(L(t),L_q)$ satisfies
\begin{equation}
\label{eq-0-lemma_r_t}
C_{1}^{2}(t-t_{0})\leq \DR^{2}(t_{0})-\DR^{2}(t)\leq C_{2}^{2}(t-t_{0})
\end{equation} 
where $C_1$ and $C_2$ are positive constants that depend only on $\tub_{\epsilon}(L_{q})$.
\item[(b)] $L(t)\subset \tub_{\epsilon}(L_{q})$ for all $t\in (t_0,T)$, and $T<t_0+\frac{\epsilon}{ C_1^2}$.
\item[(c)] If $L(t)$ converges to $L_q$ at time $T$ then for any $t\in (t_0,T]$,
\begin{equation}
\label{eq-1-lemma_r_t}
C_1\sqrt{T- t}\leq \DR(t)\leq C_2\sqrt{T- t}.
\end{equation} 
\end{enumerate}
\end{proposition}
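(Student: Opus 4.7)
The strategy is to reduce the problem to a one-dimensional ODE for the scalar function $\DR(t)=\dist(L(t),L_q)$, and then read off (b) and (c) as essentially algebraic corollaries of (a). By Proposition \ref{P:recall} each $L(t)$ is a regular leaf, so for $\epsilon$ small enough $\DR$ is smooth on $\tub_\epsilon(L_q)\cap M_{reg}$ and basic (constant on every leaf of $\fol$), and its gradient $\nabla \DR$ is the horizontal unit vector field along the minimising geodesics emanating from $L_q$. Picking any point $x_t=\varphi_t(x_0)\in L(t)$, the MCF equation together with the definition of mean curvature $\langle H,\xi\rangle=\tr A_{\xi}$ yields
\[
\frac{d}{dt}\DR(t)=\langle \nabla \DR, H\rangle_{x_t}=\tr(A_{\nabla \DR})_{x_t}.
\]
Since $\fol$ is generalized isoparametric and $\nabla \DR$ is basic, the right-hand side is basic; in particular it is independent of the chosen $x_t$, so this is a genuine ODE in $\DR(t)$.

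Plugging the two-sided estimate \eqref{Lemma-bounds_1} into this identity and multiplying by $2\DR(t)>0$ produces
\[
-2(1+\delta)D-2c\,\DR(t)\;\leq\;\frac{d(\DR^2)}{dt}\;\leq\;-2(1-\delta)D+2c\,\DR(t).
\]
As long as $L(t)\In\tub_\epsilon(L_q)$ we have $\DR(t)\leq\epsilon$, so these inequalities tighten to
\[
-C_2^2\;\leq\;\frac{d(\DR^2)}{dt}\;\leq\;-C_1^2,\qquad C_1^2:=2(1-\delta)D-2c\epsilon,\quad C_2^2:=2(1+\delta)D+2c\epsilon.
\]
Shrinking $\epsilon$ (and using $\delta\to 0$ as $\epsilon\to 0^+$) makes $C_1^2>0$. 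Integrating from $t_0$ to $t$ gives part (a). The strict monotonicity $\DR^2(t)<\DR^2(t_0)\leq\epsilon^2$ then closes a standard bootstrap: the leaf stays in $\tub_\epsilon(L_q)$ throughout $(t_0,T)$, so the estimates remain valid there, proving the first assertion of (b); the second follows from $\DR^2(t)\geq 0$, which forces $C_1^2(T-t_0)\leq \DR^2(t_0)$.

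For (c), assuming $\DR(t)\to 0$ as $t\to T$, apply (a) with $t$ in place of $t_0$ and let the upper endpoint tend to $T$: this yields $C_1^2(T-t)\leq \DR^2(t)\leq C_2^2(T-t)$, which is the desired $\sqrt{T-t}$ two-sided bound. The only genuinely non-trivial point in this outline is the derivative identity $\frac{d}{dt}\DR(t)=\tr A_{\nabla \DR}$: one must verify that $\DR$ is $C^1$ throughout the tube (no focal points of $L_q$ intervene for small $\epsilon$), that its gradient matches the horizontal unit radial field, and that $\tr A_{\nabla \DR}$ is leaf-wise constant so the ODE is well-posed. Once this is in hand, the proposition is a textbook Grönwall-type argument applied to the shape-operator bounds supplied by the preceding Proposition.
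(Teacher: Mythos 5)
Your proof is correct and follows essentially the same route as the paper: both compute $\DR'(t)=\langle\nabla\DR,H\rangle=\tr(A_{\nabla\DR})$ along a fixed integral curve, substitute the two-sided shape-operator estimate from Corollary \ref{corollary-constant-c2}, use $\DR(t)\le\epsilon$ to produce the constant bounds $-C_2^2\le(\DR^2)'\le -C_1^2$ with the same definitions of $C_1,C_2$, and then integrate; the monotonicity bootstrap for (b) and the re-anchoring at $t$ for (c) also coincide. The only cosmetic difference is that you multiply through by $2\DR$ before absorbing the $c\epsilon$ term, whereas the paper keeps the estimate in the form $-C_2^2/(2\DR)\le \DR'\le -C_1^2/(2\DR)$; these are the same manipulation in a different order.
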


\begin{proof}
Start with a tubular neighbourhood $\tub_{\epsilon_0}(L_{q})$ in which the distance function $\DR=\dist_{L_{q}}$ is smooth away from $L_q$, and such that Proposition \ref{P:recall} holds for some $\delta$ and $c$.
Fixing $p\in L(t_0)$, consider the curve
$t\to \varphi_t(p)$ such that  $\frac{d}{d t}\varphi_t(p)=H(t)$. Of course $\varphi_t(p)\in L(t)$ for all $t$, and $r(t)=\dist(\varphi_t(p),L_q)$ equals $\dist(L(t),L_q)$ by the equidistance of the leaves.
 Then we have 
\begin{equation*}
r'(t) = \langle \nabla \DR, \varphi'_{t}(p) \rangle= \langle \nabla \DR, H(t)\rangle =\tr(A_{\nabla \DR}).
\end{equation*}

 From eq.\eqref{P:recall},
\begin{equation*}
-(1+\delta)\frac{D}{\DR}-c \leq \tr A_{\nabla \DR} \leq -(1-\delta)\frac{D}{\DR}+c.
\end{equation*}

Now we choose $\epsilon < \min \{\epsilon_{0}, (1-\delta)\frac{D}{c} \}$
and define the constants $C_1$, $C_2$ by
\[
\frac{C_1^2}{2}= (1-\delta)D-c\epsilon,\quad \qquad \frac{C_2^2}{2}=(1+\delta)D+c\epsilon
\]
The above equations  imply
\begin{equation*}
-\frac{C_2^2}{ 2 \DR(t)} \leq \DR \,'(t) \leq -\frac{C_1^2}{2 \DR(t)}
\end{equation*}
 or, equivalently, $-C_2^2\leq (\DR^2(t))'\leq -C_1^2$. 
Integrating this equation we get
\begin{equation}
\label{eq-lemma_r_t-final}
C_{1}^{2}(t-t_{0})\leq \DR^{2}(t_{0})-\DR^{2}(t)\leq C_{2}^{2}(t-t_{0})
\end{equation} 
for $t>t_{0}$ close to $t_{0}$. Since $r^2(t)$ is decreasing, $L(t)$ remains in $\tub_{\epsilon}(L_q)$ for every $t> t_{0}$ and this concludes the proof of (a) and (b).

Statement (c) follows directly from (a). 
 \end{proof}

\begin{remark}
By Proposition \ref{P:r_t} it immediately follows that if $L(t_{0})$ lies in a tubular neighborhood defined as above,  then $T$ must be finite. This does not imply, for a generic $M$, that $T$ is always finite when the initial datum is outside such a tube; see \cite[Example 3]{Pacini}.
 Also note that in the proof of Proposition \ref{P:r_t},  $\epsilon$ has been chosen to be small, more precisely smaller than $(1-\delta)D/c$. 
This was necessary to ensure the existence of the constant $C_1$ (otherwise $C_{1}^{2}$ would be negative). The fact that $\epsilon$ can not
be chosen  bigger (even when it would make sense to talk about tubular neighborhoods) is not a limitation of the proof, but it seems to have a geometrical meaning. 
In fact it is possible to see, e.g. in some examples of isoparametric foliations in Euclidean space, that if the radius of the tube is too big (although the tube is still well defined) then statement (b) of Proposition \ref{P:r_t} is no longer true, i.e., 
the MCF of leaves in a tube of big radius can leave the tube after a finite time.   
\end{remark}

In the next proposition we prove that given a leaf $L_0$, if the MCF $L(t)$ with $L(0)=L_0$ has finite time singularity then it converges to a singular leaf $L_q$ in the Hausdorff sense, i.e., the 
projection of $L(t)$ in the quotient space $M/\F$  converges to the projection of $L_q$. More precisely  we prove  
the first part of statement (b) in Theorem \ref{theorem-main}.

\begin{proposition}
\label{proposition-convergence}
 Let $\F$ be a generalized isoparametric foliation with compact leaves on a complete manifold $M$ and let $L_0$ be a regular leaf. Suppose that the MCF $L(t)$ with initial datum $L(0)=L_0$ stays in a bounded set, and that $L(t)$ has a finite time singularity.
Then $L(t)$ converges in the hausdorff sense
 to some singular leaf $L_{q}$. 
\end{proposition}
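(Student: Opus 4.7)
The plan is to first extract a subsequential Hausdorff limit $L_q$ of $L(t)$ as $t\to T^-$ using the boundedness hypothesis, then to argue that $L_q$ must be singular (otherwise the flow would extend past $T$), and finally to promote subsequential convergence to full convergence by means of the tube estimate in Proposition \ref{P:r_t}.

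For the first step, let $K\subset M$ be a compact set containing $\bigcup_{t\in[0,T)}L(t)$. Since $\pi:M\to M/\fol$ is continuous and the leaves are compact, $\pi(L(t))$ lies in the compact set $\pi(K)\subset M/\fol$. Picking any sequence $t_n\to T$, a subsequence (still denoted $t_n$) satisfies $\pi(L(t_n))\to [L_q]$ in $M/\fol$ for some leaf $L_q$. Because leaves close to $L_q$ in the quotient metric are close in the Hausdorff metric (being contained in an arbitrarily thin tube around $L_q$), this gives $L(t_n)\to L_q$ in the Hausdorff sense.

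For the second step, suppose for contradiction that $L_q$ is regular. Choose $\delta>0$ with $\tub_{\delta}(L_q)\subset M_{reg}$. On this tube the mean curvature field $H$ is smooth; since it is basic it descends to a smooth vector field $\bar H$ on the orbifold neighbourhood $\tub_{\delta}([L_q])\subset M_{reg}/\fol$, with uniformly bounded $C^{1}$-norm on $\tub_{\delta/2}([L_q])$ (after lifting to a uniformizing chart if needed). Standard ODE theory then yields a uniform $\eta>0$ such that any integral curve of $\bar H$ starting in $\tub_{\delta/2}([L_q])$ exists for time at least $\eta$. Choosing $n$ large so that $\pi(L(t_n))\in \tub_{\delta/2}([L_q])$ and $t_n>T-\eta$, the curve $\gamma(t)=\pi(L(t))$ extends past $T$, contradicting the maximality of the interval $[0,T)$. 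Hence $L_q$ is singular.

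For the final step, apply Proposition \ref{P:r_t} with $\epsilon=\epsilon(L_q)$. By the first step, some $L(t_n)$ lies in $\tub_{\epsilon}(L_q)$; by parts (a)--(b), $L(t)\subset \tub_{\epsilon}(L_q)$ for every $t\in[t_n,T)$ and $r(t)^2\leq r(t_n)^2-C_{1}^{2}(t-t_n)$. Since $T$ is finite and $r(t)^2\geq 0$, this forces $r(t)\to 0$ as $t\to T^-$, yielding Hausdorff convergence $L(t)\to L_q$. The main obstacle is the second step: one has to carefully use the basic nature of $H$ to obtain a \emph{uniform} ODE existence time for the induced flow on the orbifold $M_{reg}/\fol$, while the first and third steps are straightforward consequences of compactness and of the already established tube estimate.
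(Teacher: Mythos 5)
Your proof follows the same overall strategy as the paper's: extract a subsequential Hausdorff limit leaf using boundedness, show that limit must be singular, and then upgrade to full convergence via Proposition~\ref{P:r_t}. The one genuine difference is the middle step: the paper appeals to \cite[Prop.~9.1.4]{PalaisTerng} to rule out the limit set meeting the regular stratum, whereas you argue directly that if $L_q$ were regular, then (using Proposition~\ref{P:recall} to identify $L(t)$ with an integral curve of $\pi_*H$) a uniform Picard--Lindel\"of existence time near $[L_q]$ in $M_{reg}/\fol$ would let the flow be continued past $T$. That replacement is sound and has the merit of being self-contained; you could even dispense with the orbifold charts by bounding $H$ in $C^1$ on the compact set $\overline{\tub_{\delta}(L_q)}\subset M_{reg}$ and extending the curve $t\mapsto\varphi_t(p)$ in $M$ directly.

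There is, however, a gap in your final step. From Proposition~\ref{P:r_t}(a)--(b) with $t_0=t_n$ you obtain $L(t)\subset\tub_\epsilon(L_q)$ for $t\geq t_n$ together with $r(t_n)^2-C_2^2(t-t_n)\leq r(t)^2\leq r(t_n)^2-C_1^2(t-t_n)$, but the conclusion ``since $T$ is finite and $r(t)^2\geq 0$, this forces $r(t)\to 0$'' does not follow: these bounds are perfectly compatible with $r(t)^2$ decreasing to a strictly positive limit $r_T^2\in\bigl[\,r(t_n)^2-C_2^2(T-t_n),\ r(t_n)^2-C_1^2(T-t_n)\,\bigr]$. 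You cannot invoke part~(c) either, since that is stated conditionally on the convergence you are trying to establish. To close the gap, combine the monotonicity of $r$ (supplied by Proposition~\ref{P:r_t}) with the subsequential convergence $r(t_m)\to 0$ you already obtained in step one, which forces the monotone limit $r_T$ to vanish; alternatively, run the paper's argument of applying Proposition~\ref{P:r_t}(b) with arbitrarily small $\epsilon$: for each $\epsilon>0$ pick $t_\epsilon$ with $L(t_\epsilon)\subset\tub_\epsilon(L_q)$ and conclude $L(t)\subset\tub_\epsilon(L_q)$ for all $t\in(t_\epsilon,T)$.
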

\begin{proof}

Since $L(t)$ is contained in a bounded set and $T$ is finite,  it follows 
from \cite[Proposition 9.1.4]{PalaisTerng} that the limit set of $L(t)$ can not be contained in the regular stratum and thus it must be contained in some singular stratum. When $\F$ is homogeneous this also follows from  \cite[Lemma 2,3]{Pacini}.

Now consider a singular leaf $L_{q}$ in the limit set, and take a sequence $\{t_n\}\In [0,T)$ converging to $T$. For any arbitrarily small radius ${\epsilon}$, we 
can find some $t_{\epsilon}$ such that  $L(t_{\epsilon})\in\tub_{{\epsilon}}(L_{q})$  and, by Proposition \ref{P:r_t}, $L(t)\in Tub_{\epsilon}(L_q)$ for every $t\in(t_\epsilon,T)$.
Due to the arbitrariety of ${\epsilon}$ we conclude that $L(t)$ converges to $L_{q}$.

\end{proof}

In what follows, we consider a singular leaf $L_q$ which is the limit of the MCF $L(t)$ with initial datum $L_0$. We want to prove that this singularity is of Type I, this finishing the proof of Theorem \ref{theorem-main}.

Fixing a tubular neighbourhood $\tub_{\epsilon}(L_q)$, we consider the functions $r_{\Sigma}, f: \tub_{\epsilon}(L_q)\to \RR$ such that $r_\Sigma(x)$ is the distance between $L_x$ and the singular strata, and $f(x)$ is the distance between $L_x$ and its focal set. By abuse of notation, we also define $r_{\Sigma}(t)=r_{\Sigma}(L(t))$, $f(t)=f(L(t))$.

In Corollary \ref{C:bound_r_Sigma} we prove the following.
\begin{proposition}
There exists a constant $C$, depending on $\tub_\epsilon(L_q)$, such that for any $t$ close enough to the singular time $T$ we have $r_\Sigma(t)\geq C r(t)$, where $r(t)=\dist(L(t), L_q)$.
\end{proposition}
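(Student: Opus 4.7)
The plan is to apply the shape operator bound of Corollary~\ref{corollary-constant-c2} not only to the radial distance $r(t)=\dist(L(t),L_q)$ but also to the distance $r_\Sigma$ from $L(t)$ to the entire singular set. The key observation is that along the MCF, wherever $r_\Sigma$ is smooth, $\frac{d}{dt}r_\Sigma=\tr A_{\nabla r_\Sigma}(L(t))$, and at such points $\nabla r_\Sigma=\nabla r_{L_S}$ where $L_S$ is the unique nearest singular leaf; so the shape operator estimate can be invoked inside the tube around $L_S$ rather than around $L_q$.

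First, I would verify that Corollary~\ref{corollary-constant-c2} holds with uniform constants $\delta,c$ for every singular leaf $L_S$ meeting a sufficiently small tube around $L_q$. Since only finitely many singular strata intersect the closure of such a tube, and the corollary's conclusion is intrinsic to the foliation, a compactness argument on the strata should yield
\[
\tr A_{\nabla r_{L_S}}(x)\leq -(1-\delta)\frac{D_S}{r_{L_S}(x)}+c,\qquad D_S=\dim\fol-\dim L_S\geq 1,
\]
uniformly in the regular part of $\tub_{\epsilon}(L_S)$ for every such $L_S$.

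Next, for $t$ close to $T$ one has $L(t)\subset\tub_{\epsilon}(L_q)$ with $r_\Sigma(t)\leq r(t)\leq\epsilon$. At smooth points of $r_\Sigma$, the estimate above together with $D_S\geq 1$ gives
\[
\frac{d}{dt}r_\Sigma(t)^2=2r_\Sigma(t)\tr A_{\nabla r_\Sigma}(L(t))\leq -2\bigl[(1-\delta)-c\epsilon\bigr]=:-m^2,
\]
which is strictly negative once $\epsilon$ is chosen small. Integrating from $t$ up to $T$ and using $\lim_{s\to T^-}r_\Sigma(s)=0$ (since $L(s)\to L_q$ and $L_q$ is itself singular) yields $r_\Sigma(t)^2\geq m^2(T-t)$. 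Combining with the upper bound $r(t)\leq C_2\sqrt{T-t}$ from Proposition~\ref{P:r_t} gives $r_\Sigma(t)\geq (m/C_2)r(t)$, so one may take $C=m/C_2>0$.

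The main obstacle is the non-smoothness of $r_\Sigma$: as a minimum of distance functions to distinct singular strata it is only Lipschitz, and the identity of the nearest stratum can switch along the flow. The natural way to handle this is to write $r_\Sigma$ as the pointwise minimum of finitely many smooth distance functions $r_{S_i}$, establish the differential inequality separately for each $r_{S_i}^2$, and then pass to the minimum in a viscosity/distributional sense. A secondary technical point is the uniformity of the constants in Corollary~\ref{corollary-constant-c2} across nearby singular leaves, which should again follow from compactness of the set of singular strata meeting the tube.
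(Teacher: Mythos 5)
The central gap is the claimed uniformity, via compactness, of the constants $\delta,c$ in Corollary~\ref{corollary-constant-c2} over all singular leaves $L_S$ meeting a small tube around $L_q$. This uniformity fails, and it is not a secondary technicality. Remark~\ref{R:homothetic-invariant} shows that the constant $C_L$ governing the shape-operator estimate around a singular leaf $L$ rescales as $C_{\lambda L}=C_L/\lambda$ under homotheties centred at $L_q$, so for singular leaves $L_S$ approaching $L_q$ this constant blows up like $1/\dist(L_S,L_q)$. Finiteness of the number of singular \emph{strata} does not save you: a single stratum near $L_q$ contains singular leaves at arbitrarily small distance from $L_q$. The paper handles this degeneration explicitly in Lemma~\ref{lemma-constant-c2}, whose error term is $k_L/\ol{r}(x)$ with $k_L$ homothety-invariant; the factor $1/\ol{r}(x)$ carries the blowup and cannot be replaced by a constant uniform in $L_S$.

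With the correct error term in place, your differential inequality becomes
\[
\frac{d}{dt}r_\Sigma^2(t)\leq -2(1-\delta)D_S+2k_L\,\frac{r_\Sigma(t)}{\ol{r}(L(t))},
\]
which is bounded above by a negative constant only when $r_\Sigma/\ol{r}$ is already below a threshold of order $D_S/k_L$. Since $\ol{r}(L(t))$ is comparable to $r(t)$, you would need \emph{a priori} control on the very ratio $r_\Sigma/r$ that you are trying to bound, making the argument circular. The paper's Corollary~\ref{C:bound_r_Sigma} avoids the direct ODE for $r_\Sigma$ and argues by a capture dichotomy: one builds conical tubes $\ol{\tub}_{\epsilon_L}(L)$ with $\epsilon_L\sim\ol{r}(L)$ around the other singular leaves, shows via Lemma~\ref{lemma-constant-c2} that any MCF entering such a cone stays inside it forever (hence cannot converge to $L_q$), and concludes that a flow converging to $L_q$ must remain in the complement of the union of cones, where the cone geometry yields $\dist(x,\mathcal{M})\geq C\,\dist(x,L_q)$ directly. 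To repair your approach you would effectively have to reproduce this capture lemma.
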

Together with Proposition \ref{P:r_t}, we have that there is a constant $C_{1}'=C_1C$ such that, close enough to the singular time $T$, one has
\begin{equation}
\label{eq-rSigma-T-t}
r_{\Sigma}(t)\geq C_{1}'\sqrt{T-t}.
\end{equation}

\begin{proposition}\label{P:bound f}
There exists a constant $\sigma\in (0,1)$ such that $f(p)\geq \sigma\,r_{\Sigma}(p)$ for every regular point $p\in M$.
\end{proposition}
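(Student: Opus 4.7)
The plan is to argue by induction on $\dim M$, using the distinguished tubular neighbourhoods of Section~\ref{SS:distinguished tub neighb} to reduce the estimate near each singular leaf to a lower-dimensional analogue on the unit normal sphere. The base case, in which $M$ carries no singular stratum, is vacuous. For the inductive step I would fix $\delta > 0$ and cover $M$ by the region $R_\delta := \{r_\Sigma \geq \delta\}$ together with finitely many distinguished tubes $\tub_{\epsilon_{q_1}}(L_{q_1}), \ldots, \tub_{\epsilon_{q_N}}(L_{q_N})$ whose union contains $M \setminus R_\delta$ (possible by compactness). On $R_\delta \subseteq M_{reg}$, which is compact, the focal distance $f$ is lower semi-continuous and everywhere positive, hence bounded below by some $f_0 > 0$; combined with $r_\Sigma \leq \diam M$ this gives $f/r_\Sigma \geq f_0/\diam M$ throughout $R_\delta$.

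On each distinguished tube $\tub_{\epsilon_q}(L_q)$ I would exploit the local product model: under $\varphi$ the foliation is identified with $T_qL_q \times \fol_q^\nu$ on an open set $V \subseteq T_qM$, where the infinitesimal foliation $\fol_q^\nu$ on $\nu_qL_q$ is invariant under all positive dilations (property~(4) in Section~\ref{SS:distinguished tub neighb}). Its leaves are cones, and $\fol_q^\nu$ descends to a singular Riemannian foliation $\fol_q^S$ on the unit sphere $S(\nu_qL_q)$, which is compact, has compact leaves, and has dimension $\dim M - \dim L_q - 1 < \dim M$. The induction hypothesis applied to $\fol_q^S$ produces some $\sigma_q \in (0,1)$ with $f_S \geq \sigma_q r_\Sigma^S$ on $S(\nu_qL_q)$. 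By scale invariance both $f$ and $r_\Sigma$ for $\fol_q^\nu$ (in the Euclidean metric on $\nu_qL_q$) scale linearly in $|x|$, so this spherical bound extends to the cone $\nu_qL_q \setminus \{0\}$; by the product structure it extends further to all of $V \setminus (T_qL_q \times \{0\})$.

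Finally I would transfer the bound from $(V, \fol_q)$ back to $(\tub_{\epsilon_q}(L_q), \fol)$ via $\varphi$. Since $\varphi$ is close to the exponential map at $q$, for $\epsilon_q$ sufficiently small it approximates an isometry with multiplicative distortion of order $O(\epsilon_q)$, yielding $f/r_\Sigma \geq \sigma_q/2$ on the tube after a possible shrinking of $\epsilon_q$. Setting $\sigma := \min\{f_0/\diam M,\, \sigma_{q_1}/2,\ldots, \sigma_{q_N}/2\}$ concludes the argument. I expect the main obstacle to be the transfer step: one must show that focal distances and the distance to the singular stratum depend continuously enough on the ambient metric that the small metric distortions introduced by $\varphi$, and those relating the spherical and Euclidean versions of $f_S, r_\Sigma^S$ on $S(\nu_qL_q)$, produce only small multiplicative changes. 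A careful Jacobi-field comparison should suffice, but in particular one needs to verify that for $p$ close enough to $L_q$ the nearest focal point and the nearest singular leaf of $L_p$ actually lie inside $\tub_{\epsilon_q}(L_q)$, so that the local model faithfully captures them.
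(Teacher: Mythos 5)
Your approach is genuinely different from the paper's. The paper's proof works entirely downstairs in the quotient $M/\fol$: it uses the Lytchak--Thorbergsson dichotomy that the first focal point of $L_p$ is either a singular leaf (in which case $f(p)=r_\Sigma(p)$) or a conjugate point of $\pi(p)$ in $M_{reg}/\fol$, invokes the curvature explosion bound $\sec_{M/\fol}(x^*)\leq K/r_\Sigma(x^*)^2$, and then applies Rauch comparison; a rescaling $g(s)=h(r_\Sigma(p)s)$ of the resulting Jacobi ODE makes $\sigma$ manifestly independent of $p$. This gives a multiplicative bound directly, with no local models and no induction.

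Your proposal, an induction on $\dim M$ via the infinitesimal foliation at each singular leaf, has two genuine gaps beyond the one you flag. First, the ``scale invariance'' step does not actually transfer the inductive bound from $S(\nu_qL_q)$ to the flat cone: the inductive hypothesis bounds $f_S/r_\Sigma^S$ for the \emph{round} metric on the unit normal sphere, whereas what you need on the cone is a bound on $\ol{f}/\ol{r}_\Sigma$ for the \emph{Euclidean} metric restricted to that sphere, and these two pairs of functions are not proportional to one another (spherical focal distances are governed by $\arccot$ of shape-operator eigenvalues, Euclidean ones by their reciprocals; the singular sets differ by the inclusion of the origin; etc.). A separate sphere-versus-cone comparison, not supplied by homogeneity, is required. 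Second, and more seriously, the transfer from the flat model metric $\ol g$ to the ambient $g$ on a distinguished tube needs multiplicative control on a ratio of two quantities that both tend to $0$ at $L_q$, and focal distances are governed by the Jacobi equation, hence by curvature ($C^2$-data of the metric), not merely by $C^0$-closeness. An additive error of size $O(\epsilon_q)$ on $f$ is worthless when $f$ itself is $\ll \epsilon_q$; you would have to show that the distortion of $f/r_\Sigma$ goes to zero as $p\to L_q$, which is a nontrivial Jacobi-field comparison that you gesture at but do not carry out. The paper's quotient-space argument avoids both difficulties by encoding the small-scale geometry entirely in the $K/r_\Sigma^2$ curvature bound and the rescaling of the ODE.
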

\begin{proof}
The functions $r_{\Sigma}$ and $f$ are constant along the leaves of $\fol$, and thus induce functions on the quotient, which we denote with the same letters. By  Lytchak and Thorbergsson \cite{LytchakThorbergsson},
 the first focal point of a leaf $L_p$ corresponds to either a singular leaf, or to a conjugate point in $M/\fol$ of the projection of $L_p$. In the first case, $f(p)=r_{\Sigma}(p)$ and the proposition is proved.

Suppose now that the projection $p^{*}$ of $L_p$ into $M/\F$ has a conjugate point along some geodesic segment $\gamma$ contained in the regular part of $M/\F$.
Clearly $r_{\Sigma}(\gamma(s))\geq r_{\Sigma}(p)-s$.  From  Lytchak and Thorbergsson \cite[Remark 1.1]{LytchakThorbergsson}, 
the supremum $\sup(\sec_{M/\fol}( x^*))$ of the sectional curvatures at a point $x^*$ in $U/\F$ satisfies
\begin{equation}\label{E:LT}
\sup(\sec_{M/\fol}( x^*))\leq \frac{K}{ r_{\Sigma}( x^*)^2},
\end{equation}
for some constant $K$. Together with the previous equations,
\begin{equation}\label{E:sup-curv}
\sec_{M/\fol}(\gamma(s))\leq \frac{K}{ r_{\Sigma}(\gamma(s))^2}\leq  \frac{K}{ (r_{\Sigma}(p)-s)^2}.
\end{equation}
By Rauch's Comparison Theorem, the first conjugate point along $\gamma$ appears after the first conjugate point along a geodesic $\ol{\gamma}$ in a model space with curvature $\kappa(\ol{\gamma}(s))= \frac{K}{ (r_{\Sigma}(p)-s)^2}$.

To compute the conjugate point in such a model, it is enough to find the first positive zero of a solution $h$ to the ODE
\begin{equation}
\label{E:ODE-f}
\left\{\begin{array}{rcl} h''(s)&=& -\frac{K}{ (r_{\Sigma}(p)-s)^2}h(s)\\ h(0)&=&0\end{array}\right.
\end{equation}
If we define $g(s)=h(r_{\Sigma}(p) s)$ then $g$ satisfies the equation
\begin{equation}\label{E:ODE-g}
\left\{\begin{array}{rcl} g''(s)&=&- \frac{K}{ (1-s)^2}g(s)\\ g(0)&=&0\end{array}\right.
\end{equation}
and if $\sigma_0$ is the first zero of $g$ in $(0,1)$, then the first zero of $h$ is $\sigma_0 r_{\Sigma}(p)$ and $f(p)\geq\sigma_0 r_{\Sigma}(p)$.

 On the other hand, if $g$ does not admit any zeroes in $(0,1)$, then $h$ does not admit any zeroes in $(0, r_{\Sigma}(p))$ and therefore the first conjugate point along $\gamma$ appears after 
$r_{\Sigma}(p)$. In either case, we proved that $f(p)\geq\sigma\,r_{\Sigma}(p)$, where
\[
\sigma=\left\{
\begin{array}{cl}
\sigma_0 & \textrm{if there exists a zero $\sigma_0$ of $g$ in }(0,1)\\
1& \textrm{otherwise}
\end{array}
\right.
\] 

Notice that $\sigma$ does not depend on $p$.
\end{proof}

We can now prove statement (b) of Theorem \ref{theorem-main}.

\begin{proposition}\label{P:type-I}
Let $\F$ be a generalized isoparametric foliation  with compact leaves on $M$. 
Let $L(t)$ be a MCF evolution with initial datum $L_0\in\fol$. Assume that the MCF $L(t)$ converges to a singular leaf $L_q$. Then the flow has type I singularity, i.e.,
\begin{equation}\label{equation-TypeI}
\limsup_{t\to T^{-}}\|A(t)\|_{\infty}^2(T-t)<\infty
\end{equation}
where $\|A(t)\|_{\infty}$ is the sup norm of the shape operator of $L(t)$.
\end{proposition}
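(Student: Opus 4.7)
The plan is to bound $\|A(t)\|_\infty$ pointwise by a constant multiple of $1/\sqrt{T-t}$, which will immediately yield the type I estimate once squared. Concretely, I would chain three inequalities already (almost) available in the paper: $\|A(t)\|_\infty \lesssim 1/f(t)$, then $f(t) \geq \sigma\, r_\Sigma(t)$ (Proposition \ref{P:bound f}), and finally $r_\Sigma(t) \geq C_1'\sqrt{T-t}$ (equation \eqref{eq-rSigma-T-t}, which applies once $L(t)$ has entered $\tub_\epsilon(L_q)$).

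The first, and only nontrivial, step is the uniform bound $\|A(t)\|_\infty \leq C/f(t)$, where $C$ depends only on a sectional curvature bound of $M$ on $\tub_\epsilon(L_q)$. This is a standard Jacobi field comparison argument: at any regular point $p \in L(t)$, pick a unit normal $\xi$ and an eigenvalue $\lambda$ of $A_\xi$ with eigenvector $v$. The $L(t)$-Jacobi field $J(s)$ along $\gamma(s)=\exp_p(s\xi)$ with $J(0)=v$, $J'(0)=-\lambda v$ detects focal points of $L(t)$, so $J$ must have a zero at some $s_0 \le f(t)$. Rauch's comparison theorem applied in a constant-curvature model space (using compactness of $M$ to bound $|\sec_M|$ on $\tub_\epsilon(L_q)$) then forces $|\lambda|\,s_0$ to exceed a positive constant, hence $|\lambda| \leq C/f(t)$. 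Taking the supremum over $p$, $\xi$ and over the eigenvalues gives the claimed bound on $\|A(t)\|_\infty$.

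With this in hand, the conclusion is immediate. For $t$ sufficiently close to $T$, so that $L(t)\subset \tub_\epsilon(L_q)$, we chain
\[
\|A(t)\|_\infty \;\leq\; \frac{C}{f(t)} \;\leq\; \frac{C}{\sigma\, r_\Sigma(t)} \;\leq\; \frac{C}{\sigma\, C_1'\sqrt{T-t}},
\]
and therefore
\[
\|A(t)\|_\infty^2 (T-t) \;\leq\; \frac{C^{2}}{\sigma^{2}(C_1')^{2}},
\]
which gives \eqref{equation-TypeI}.

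The main obstacle is making the bound $\|A\|\lesssim 1/f$ genuinely uniform on $\tub_\epsilon(L_q)$, since in the ambient curved setting one must control both the sectional curvature of $M$ and the behavior of Jacobi fields near the singular stratum. However, as $\tub_\epsilon(L_q)$ is relatively compact in $M$ the curvature is bounded there, and the Rauch comparison is uniform in the base point, so no further input beyond what is already in Sections \ref{S:preliminaries}--\ref{section-main-proofs} should be required.
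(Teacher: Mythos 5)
Your overall plan is correct and closely parallels the paper's: both arguments chain the three estimates $\|A(t)\|_\infty \lesssim 1/f(t)$, then $f(t) \geq \sigma\, r_\Sigma(t)$ (Proposition~\ref{P:bound f}), then $r_\Sigma(t) \geq C_1'\sqrt{T-t}$ (equation~\eqref{eq-rSigma-T-t}). The real difference is only in how the first inequality is established. The paper pulls back the flat metric $\ol{g}$ from $T_{q'}M$ via the distinguished chart, where the identity $\|\ol{A}_t\|_\infty = 1/\ol{f}(t)$ is exact (Lemma~\ref{focal}), and then transfers the bound to the ambient metric via $\|A_t\|_\infty \leq C_1\|\ol{A}_t\|_\infty + C_2$ (it also runs Proposition~\ref{P:bound f} in the flat metric). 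You propose to avoid the auxiliary flat metric and prove $\|A(t)\|_\infty \leq C/f(t)$ directly by Riemannian comparison. That is a valid, arguably cleaner, alternative for this proposition.

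However, the comparison step as you wrote it is wrong in its details. First, ``$J$ must have a zero at some $s_0 \leq f(t)$'' has the inequality backwards: $f(t)$ is the distance to the \emph{first} focal point of $L(t)$ over all normal geodesics, and the first zero of an $L(t)$-Jacobi field is itself a focal distance, so $s_0 \geq f(t)$; moreover, in the presence of negative curvature such a $J$ need not vanish at all. Second, the asserted implication ``$|\lambda|\,s_0$ exceeds a positive constant, hence $|\lambda| \leq C/f(t)$'' does not follow: a lower bound on $|\lambda|\,s_0$ combined with $s_0 \geq f(t)$ gives no upper bound on $|\lambda|$, and indeed $|\lambda|\,s_0$ is unbounded in a negatively curved model as $|\lambda|$ decreases toward the critical value. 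The correct argument runs through the index form. Choose $\kappa$ with $\sec_M \geq -\kappa^2$ on $\tub_\epsilon(L_q)$. Given $p\in L(t)$, a unit normal $\xi$, and an eigenvalue $\lambda$ of $A_\xi$ with unit eigenvector $v$ (replacing $\xi$ by $-\xi$ if $\lambda<0$), test the index form of $\gamma_\xi$ on $[0,\ell]$, $\ell < f(t)$, with $V(s)=\phi(s)E(s)$ where $E$ is the parallel transport of $v$ and $\phi(s)=\sinh(\kappa(\ell-s))/\sinh(\kappa\ell)$. Since there is no focal point of $L(t)$ on $(0,\ell]$, the index form is positive, which yields $\lambda < \kappa\coth(\kappa\ell)$; letting $\ell\to f(t)^-$ gives $\|A(t)\|_\infty \leq \kappa\coth(\kappa f(t)) \leq C/f(t)$ on the tube. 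With the step corrected in this way, the rest of your chain goes through and the type I estimate follows.
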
 
\begin{proof}

Fixing $q'\in L_q$, we consider a distinguished tubular neighbourhood $O_{\epsilon}$ around $q'$, with map $\varphi:O_{\epsilon}\to T_{q'}M$ as described in Section \ref{S:preliminaries}. We let $\ol{g}$ denote the pullback of the flat metric in $T_{q'}M$ via $\varphi$. We also denote by $\ol{A}, \ol{f}, \ol{r}_{\Sigma}$, etc., the quantities corresponding to $A,f, r_{\Sigma}$, etc.,  computed using the flat metric $\ol{g}$.

By calculations similar to those behind the proof of equation  \eqref{E:traces-ineq}, we can prove that there exist constants $C_1,C_2$ (that depend only on  $O_{\epsilon}(q')$ and $\varphi$) such that:
\begin{equation}
\label{type1aproximation-eq1}
\|A_t \|_{\infty} \leq C_{1} \| \overline{A}_t \|_{\infty}+ C_{2}.
\end{equation}

On the other hand we claim that
\begin{equation}
\label{eq-bounded-flatmetric-typeI} 
\| \overline{A}_t \|_{\infty}\sqrt{T-t}\leq C_3
\end{equation}
where $C_3$ is a constant that depends only on $O_{\epsilon}(q')$.
In fact, by Lemma \ref{focal} we have $\| \overline{A}_t \|_{\infty}=1/\bar{\FD}(t)$, where again $\bar{\FD}(t)$ is the  distance between  the submanifold $L(t)$ and its
first focal point with respect to the flat metric.
Moreover,  from equation \eqref{eq-rSigma-T-t} we have  $\ol{r}_\Sigma(t) >C r_{\Sigma}(t)>C\sqrt{T-t}$. Applying Proposition \ref{P:bound f} to the flat metric, $\ol{f}(t)>C_3\sqrt{T-t}$ and equation \eqref{eq-bounded-flatmetric-typeI}  follows.

Equations \eqref{eq-bounded-flatmetric-typeI}, \eqref{type1aproximation-eq1} and the compactness of $L_q$ imply \eqref{equation-TypeI}.

\end{proof}

We have already discussed that if $L(t)$ is a MCF with initial datum $L_0\in \fol$ and there is a finite time singularity, then $L(t)$ converges to a singular leaf $L_q$ in the Hausdorff sense. We now show that the convergence is in fact pointwise, i.e. for every $p\in L_0$ the integral curve  $t\to \varphi_t(p)$ of $H$ converges to a point in the singular leaf $L_q$ as $t\to T^-$. 

\begin{proposition}
\label{proposition-convergence-point}

Let $(M,\fol)$ be a generalized isoparametric foliation with compact leaves, and let $L(t)=\phi_t(L_0)$ be the MCF evolution with $L(0)=L_0$ a regular leaf of $\fol$. Assume that $L(t)$ converges to singular leaf $L_q$ in a finite time $T$ and let $p\in L(0)$. Then  $\varphi_t(p)$ converges  to a point of $L_{q}$. 
\end{proposition}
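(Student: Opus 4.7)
The plan is to show that the integral curve $t\mapsto\varphi_t(p)$ has finite length on $[0,T)$, so that it is Cauchy in the complete manifold $M$, hence convergent; the Hausdorff convergence $L(t)\to L_q$ will then force the limit point to lie in $L_q$. This reduces the statement to a uniform integrable bound on $\|H(t)\|$ along the orbit.

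First, I would use the hypothesis that $\fol$ is generalized isoparametric. Since $H$ is basic, it is the horizontal lift of a vector field on the orbifold $M_{reg}/\fol$; because $\pi\colon M_{reg}\to M_{reg}/\fol$ is a Riemannian submersion, $\|H\|$ is constant along each regular leaf $L(t)$. Thus for every $p\in L_0$,
\begin{equation*}
\|\varphi_t'(p)\|=\|H(t)\|_{\varphi_t(p)}=\|H\|_{L(t)},
\end{equation*}
which depends only on $t$. A standard pointwise estimate $\|H\|\le(\dim\fol)\,\|A(t)\|_{\infty}$ then reduces the problem to controlling $\|A(t)\|_\infty$.

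Next I would invoke Proposition \ref{P:type-I}: there is a constant $C$ with $\|A(t)\|_\infty\le C/\sqrt{T-t}$ for $t$ close to $T$. Combining this with the previous bound gives $\|H\|_{L(t)}\le C'/\sqrt{T-t}$, whence for $0\le s<t<T$,
\begin{equation*}
\dist(\varphi_s(p),\varphi_t(p))\le\int_s^t\|\varphi_u'(p)\|\,du\le C'\int_s^t\frac{du}{\sqrt{T-u}}=2C'\bigl(\sqrt{T-s}-\sqrt{T-t}\bigr).
\end{equation*}
The right-hand side tends to $0$ as $s,t\to T^-$, so $\{\varphi_t(p)\}$ is a Cauchy net; since $M$ is complete, the limit $p_\infty:=\lim_{t\to T^-}\varphi_t(p)$ exists in $M$.

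Finally, I would identify $p_\infty$. By Proposition \ref{proposition-convergence}, $L(t)$ converges to $L_q$ in the Hausdorff sense, so $\dist(\varphi_t(p),L_q)\le\dist_H(L(t),L_q)\to 0$. Since $L_q$ is closed (in fact compact), $p_\infty\in L_q$, which is the claim. The only place where care is needed is the initial reduction: one must use that basicness of $H$ (together with the Riemannian submersion property on $M_{reg}$) makes $\|H\|$ leafwise constant, so that the sup-norm type I bound translates directly to a bound on $\|\varphi_t'(p)\|$; everything else is a routine integrable-speed argument.
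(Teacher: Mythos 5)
Your proof is correct and rests on the same key input as the paper's proof, namely the Type~I bound of Proposition~\ref{P:type-I}, which gives $\|A(t)\|_\infty\lesssim(T-t)^{-1/2}$ and hence an integrable bound on the speed $\|\varphi_t'(p)\|=\|H\|_{L(t)}$. The execution is slightly different and arguably cleaner: you convert the integrable speed directly into a finite-length / Cauchy argument and conclude convergence of $\varphi_t(p)$ in $M$, then use Hausdorff convergence to place the limit in $L_q$. The paper instead reparametrizes the curve (e.g.\ by $\sigma(s)=T-T(1-s)^2$) so that the reparametrized velocity is \emph{uniformly bounded}, argues that the composition with the orthogonal projection $\pi\colon\tub_\epsilon(L_q)\to L_q$ is Lipschitz and hence convergent in $L_q$, and then combines this with the Hausdorff convergence of $L(t)$ to $L_q$ to recover convergence of $\varphi_t(p)$ itself. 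Both routes are sound; your direct finite-length argument avoids the extra reparametrization and projection step, at the cost of appealing to completeness of $M$ (which is available since $M$ is assumed compact). The auxiliary observation that basicness of $H$ makes $\|H\|$ leafwise constant is correct but not strictly needed: the bound $\|H\|_{\varphi_t(p)}\le\dim\fol\cdot\|A(t)\|_\infty$ already suffices pointwise.
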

\begin{proof}

Let $\gamma(t)=\varphi_{t}(p)$ be the integral curve of $H$ starting at $p$. By Proposition \ref{P:type-I} there exists a reparametrization $\sigma:[0,1)\to [0,T)$ such that $\beta(s):=\gamma(\sigma(s))$ has $\| \beta'(s)\|<\infty$, consider for example $\sigma(s)=T-T(1-s)^2$.

In what follows we prove that $\beta$ converges to a point of $L_{q}$. 

Fixing some $\epsilon>$, let $\pi: \tub_\epsilon(L_q)\to L_{q}$ be the orthogonal projection. Since  
$\| \beta'(s) \|<\infty$, $\pi\circ\beta:[0,1)\to L_q$ is Lipschitz and thus $\lim_{s\to 1}\pi(\beta(s))=p'$ for some $p'\in L_q$. Since $L(t)$ converges to the leaf $L_{q}$, this concludes the proof.

\end{proof}


\section{Estimates on the shape operator}\label{S:technical}

The goal of this section is to compute bounds for the shape operator of a singular Riemannian foliation, starting with foliations in Euclidean space. We start by recalling the following well known fact.
\begin{lemma}\label{focal}
Given a submanifold $L\In \RR^n$ and a normal vector $x$ to $L$, tangent to the \emph{stratum} $\Sigma_L$, let $\lambda_1,\ldots \lambda_r$ be the eigenvalues of the shape operator $A_x$ counted with multiplicity. Then the focal points of $L$ along the geodesic $\gamma_x(t)=\exp tx$ are at distance $1/\lambda_1,\ldots 1/\lambda_r$.
\end{lemma}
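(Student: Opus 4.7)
The plan is to reduce the claim to the classical characterization of focal points in Euclidean space via the normal exponential (endpoint) map. Since geodesics in $\RR^n$ are straight lines, $\gamma_x(t) = p + tx$ where $p$ is the foot of $x$, and the endpoint map $\eta\colon \nu L \to \RR^n$, $\eta(q,v) = q + v$, provides a clean setting. By definition a focal point of $L$ along $\gamma_x$ is a critical value of $\eta$, so the task reduces to identifying the values of $t$ for which $d\eta_{(p,tx)}$ fails to be an isomorphism.

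The key computation is to evaluate $d\eta_{(p,tx)}$ on the two natural types of tangent vectors at $(p,tx) \in \nu L$. On a vertical direction $u \in \nu_p L$ we simply have $d\eta(u) = u \in \nu_p L$. For a horizontal lift of $w \in T_p L$, pick a curve $\alpha(s) \subseteq L$ with $\dot\alpha(0) = w$ and extend $x$ to a parallel normal field $\tilde x(s)$ along $\alpha$; the hypothesis that $x$ is tangent to $\Sigma_L$ guarantees that such a smooth parallel extension exists within the stratum. The Weingarten identity then gives
\begin{equation*}
d\eta_{(p,tx)}(w) \;=\; w + t\,\nabla_w \tilde x \;=\; w - t\,A_x w + t\,\nabla^{\perp}_w \tilde x \;=\; (I - t A_x)\,w,
\end{equation*}
since $\nabla^{\perp}_w \tilde x = 0$ by construction. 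Because the horizontal image lies in $T_p L$ and the vertical image lies in $\nu_p L$, the full differential $d\eta_{(p,tx)}$ is singular precisely when $(I - tA_x)\bigl|_{T_p L}$ is, i.e.\ exactly when $t = 1/\lambda_i$ for some nonzero eigenvalue $\lambda_i$ of $A_x$. This is the asserted list of focal distances (with sign conventions understood so that negative eigenvalues produce focal points along the opposite ray).

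I do not anticipate any substantive obstacle: the argument is a direct application of the Weingarten equation together with the elementary geometry of the endpoint map in flat space. The only mildly delicate point is the role of the stratum assumption, which ensures that $x$ can indeed be propagated as an honest parallel normal field along $L$ inside $\Sigma_L$; once this extension is in hand the calculation is immediate, and the eigenvalue multiplicities of $A_x$ match the multiplicities of the focal points by construction.
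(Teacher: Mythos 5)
The paper states this lemma without proof, introducing it only as a ``well known fact,'' so there is no proof of the paper's own to compare against. Your argument --- realizing focal points as critical values of the Euclidean endpoint map $\eta(q,v)=q+v$ on $\nu L$, splitting $T_{(p,tx)}\nu L$ into horizontal and vertical parts, and computing the horizontal differential via the Weingarten equation to obtain $(I-tA_x)$ --- is the standard proof, and it is correct, including the handling of multiplicities and the sign convention for negative eigenvalues.

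One small point to flag: your claim that the stratum hypothesis is what ``guarantees that such a smooth parallel extension exists'' is not accurate. A parallel normal field $\tilde x(s)$ along a curve $\alpha(s)\subseteq L$ always exists, for any submanifold of $\RR^n$ and any normal initial vector, since it is obtained by solving a linear ODE for the normal connection; no stratum condition is needed for that step. The phrase ``tangent to the stratum $\Sigma_L$'' in the lemma reflects the paper's application context (leaves of a singular Riemannian foliation, where one restricts to normal directions tangent to the stratum containing $L$), rather than being an ingredient used in the Euclidean focal-point computation you carried out. Your calculation goes through unchanged without that hypothesis, so this is a misattribution of its role, not a gap in the argument.
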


Let $(M, \fol)$ be a singular Riemannian foliation, let $q\in M$ be a singular point, of $\fol$, and let $O_{\epsilon}$ be a distinguished tubular neighbourhood around $q$ (cf. Section \ref{SS:distinguished tub neighb}). Let $g$ denote the restriction to $O_{\epsilon}$ of the metric of $M$ and let $\ol{g}$ denote the pullback of the flat metric on $T_qM$ under $\varphi:O_{\epsilon}\to T_qM$. Let $\nabla,\ol{\nabla}$ denote the Levi-Civita connections of $g$ and $\ol{g}$ respectively, and let $\omega$ denote the connection difference tensor
\[
\omega(X,Y)=\nabla_YX-\ol{\nabla}_YX.
\]
We let $G$ be the symmetric $(1,1)$-tensor such that $g(x,y)=\ol{g}(Gx,y)$ for every $x,y\in T_{q}M|_U$. The splitting $T_qM=T_qL_q\times \nu_qL_q$ induces via $\varphi$ a $\ol{g}$-orthogonal splitting $O_{\epsilon}=P\times S$ such that $P_q=P\times\{s\}$ for some $s\in S$. The submanifolds $S_{q'}=\{q'\}\times S$, $q'\in S$, are called \emph{slices} of $O_{\epsilon}$. Clearly, the slices are flat in the $\ol{g}$ metric and they contain all the $\ol{g}$-orthogonal spaces of the leaves in $O_{\epsilon}$.

Any geometric quantity related to a flat metric will be denoted with a bar, e.g., $\ol{\tr}, \ol{A}$. Given a leaf $L$ of $(\RR^n,\fol)$, denote by $\ol{r}_L$ the distance function from $L$ in the flat metric.

\begin{remark}\label{R:bar-r-is-global}
Given two distinguished tubular neighbourhoods $O_{\epsilon}(q)$, $O_{\epsilon}(q')$ with $q'\in L_q$, the corresponding radial functions $\ol{r}(p)=\ol{\dist}(L_q,p)$ with respect to the two flat metrics agree on the intersection. Therefore, even though the flat metric $\ol{g}$ is only defined locally yet $\ol{r}$ can be uniquely defined on a neighbourhood of the whole leaf $L_q$, and it makes sense to define
\[
\ol{\tub}_{\epsilon}(L_q)=\{p\in M\mid \ol{r}(p)<\epsilon\}.
\]
Even more so, there exists a metric $g_0$ in $\ol{\tub}_\epsilon(L_q)$ such that, for any distinguished neighbourhood $O_{\epsilon}$, $g_0$ has the same transverse metric of $\ol{g}$ (cf. \cite{AlexandrinoDesingularization}). In particular, for any leaf $L\In \ol{\tub}_{\epsilon}(L_q)$ it is possible to define a distance function $\ol{r}_L(p)$ in $\ol{\tub}_{\epsilon}(L_q)$ whose restriction to any distinguished tubular neighbourhood $O_{\epsilon}(q')$, $q'\in L_q$, coincides with $\ol{\dist}(L,p)$ in the flat metric.
\end{remark}

\begin{lemma}
Let $(\RR^n,\fol)$ be a singular Riemannian foliation, and let $L$ be a singular leaf. Then for every $\epsilon_L$ small enough there is a constant $C_L$ such that
\begin{equation}\label{E:bound-A-on-flat}
-\frac{D_x}{ \ol{r}_L(x)}-C_L\leq\left(\ol{\tr}\,\ol{A}_{\ol{\nabla}\ol{r}_L}\right)_x\leq -\frac{D_x}{ \ol{r}_L(x)}+C_L\qquad \forall x\in \Tub_{\epsilon_L}(L)
\end{equation}
with $D_x=\dim L_x -\dim L$.
\end{lemma}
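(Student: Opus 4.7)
The main idea is to localize at a point $q\in L$, pass to the infinitesimal picture, and exploit its product structure, together with the fact that the origin is a leaf of the transverse infinitesimal foliation.

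Fix $q\in L$ and a distinguished tubular neighborhood $O_\epsilon(q)$ with diffeomorphism $\varphi\colon O_\epsilon(q)\to V\In T_qM$. Under $\varphi$, the foliation $\fol|_{O_\epsilon}$ identifies with the restriction of the infinitesimal foliation $\fol_q$ on $T_qM=T_qL\times \nu_qL$, and by property (3) of Section \ref{SS:distinguished tub neighb} every leaf of $\fol_q$ factors as $T_qL\times L_v$, where $L_v\In\nu_qL$ is a leaf of the transverse foliation $\fol_q\cap\nu_qL$. Working in the pullback flat metric $\ol{g}=\varphi^* g_{\mathrm{flat}}$, a point $x=\varphi^{-1}(p,v)$ satisfies $\ol{r}_L(x)=|v|$ and $\ol{\nabla}\ol{r}_L(x)=(0,v/|v|)$. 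Since the $T_qL$-factor is affine, the second fundamental form of $T_qL\times L_v$ vanishes on $T_qL$-directions and coincides with that of $L_v\In\nu_qL$ on $L_v$-directions; thus
\[
\ol{\tr}\,\ol{A}_{\ol{\nabla}\ol{r}_L}(x)=\tr A^{L_v}_{v/|v|}.
\]

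Next, since $\{0\}=\varphi(q)$ is a singular leaf of $\fol_q\cap\nu_qL$, equidistance of leaves in an SRF forces $L_v$ to sit on the Euclidean sphere $S_{|v|}\In\nu_qL$. This sphere is totally umbilic with shape operator $-|v|^{-1}I$ in the outward radial direction, and $v/|v|$ is normal to $L_v\In S_{|v|}$, so $A^{L_v}_{v/|v|}=-|v|^{-1}\mathrm{id}_{TL_v}$ and
\[
\ol{\tr}\,\ol{A}_{\ol{\nabla}\ol{r}_L}(x)=-\frac{\dim L_v}{|v|}=-\frac{D_x}{\ol{r}_L(x)},
\]
using $\dim L_v=\dim L_x-\dim L$. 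In the pullback flat metric on $O_\epsilon(q)$ this is an \emph{exact} equality, i.e.\ with $C_L=0$ locally.

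Finally, to recover the stated estimate with respect to the ambient flat metric of $\RR^n$, I would compare $\ol{g}=\varphi^* g_{\mathrm{flat}}$ with the Euclidean metric using the connection-difference tensor $\omega$ and the symmetric tensor $G$ introduced at the beginning of the section; the two flat metrics agree along $L$ and differ, on $O_\epsilon(q)$, by terms uniformly bounded in $\epsilon$. This discrepancy produces a uniformly bounded additive error in the trace of the shape operator. Covering $\Tub_{\epsilon_L}(L)$ by finitely many distinguished neighborhoods (using compactness of $L$, or uniform control if $L$ is not compact) and taking the maximum of the resulting errors yields the constant $C_L$. I expect the main obstacle to be this quantitative comparison step: showing that switching from $\ol{g}$ to the Euclidean metric perturbs $\ol{\tr}\,\ol{A}_{\ol{\nabla}\ol{r}_L}$ by at most an $O(1)$ amount uniformly over $\Tub_{\epsilon_L}(L)$, which requires careful bookkeeping of $\omega$, $G$ and their effect on both $\ol{\nabla}\ol{r}_L$ and the shape operator.
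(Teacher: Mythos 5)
Your exact computation in the infinitesimal model is correct and it is a nice observation: in the pullback metric $\ol{g}=\varphi^*g_{\mathrm{flat}}$ the leaves split as $T_qL\times L_v$ with $L_v$ lying on a Euclidean sphere about the origin of $\nu_qL$, and one really does get the clean identity $\ol{\tr}\,\ol{A}_{\ol{\nabla}\ol{r}_L}=-D_x/\ol{r}_L$ with no error term. The problem is precisely the step you describe as ``careful bookkeeping'', and it is more than bookkeeping. First, a small correction: $\ol{g}$ does \emph{not} agree with the ambient Euclidean metric along all of $L\cap O_\epsilon$, only at the base point $q$. By property (2) of a distinguished tubular neighbourhood, $\varphi$ carries $L\cap O_\epsilon$ onto the \emph{flat} space $T_qL$, so $\varphi|_L$ is not an isometry unless $L$ happens to be flat; the two metrics are merely $\delta(\epsilon)$-close in $C^0$ over $O_\epsilon$, with $\delta(\epsilon)\to 0$ as $\epsilon\to 0$ but $\delta>0$ for any fixed $\epsilon$. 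Second, and this is the real obstruction, a $C^0$-perturbation of the metric by a factor of size $\delta$ perturbs the unit normal, the tangential inner product, and the Christoffel symbols, and the resulting error in the trace of the shape operator contains a term of order $O(\delta\,\|A\|)=O(\delta/\ol{r}_L)$. This is a \emph{multiplicative} error, not an additive one, and for any fixed $\epsilon_L$ it blows up as $\ol{r}_L\to 0$, so it cannot be absorbed into a constant $C_L$. This is exactly why, when the paper \emph{does} carry out this same metric comparison later, in the proof of Lemma~\ref{lemma-constant-c2}, it only obtains the two-sided multiplicative bound \eqref{E:traces-ineq} with factors $(1\pm\delta)$, rather than an additive one. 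So the route via $\fol_q$ plus a metric comparison cannot, on its own, deliver the purely additive estimate \eqref{E:bound-A-on-flat}.

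The paper's proof avoids the comparison entirely by working directly in the ambient Euclidean metric of $\RR^n$. Using the normal exponential map and the slices $S_p=p+\nu_p L$, one decomposes $T_xL_x=V_1(x)\oplus V_2(x)$ with $V_1(x)=T_xL_x\cap T_xS_{P(x)}$ and $V_2(x)$ its orthogonal complement. The observation that $L_x\cap S_p$ sits on a round sphere of radius $\ol{r}_L(x)$ inside the flat slice $S_p$ — which is the same geometric fact you used in the infinitesimal model, now phrased via Lemma~\ref{focal} — shows that $V_1(x)$, of dimension $D_x$, is \emph{exactly} the $(-1/\ol{r}_L)$-eigenspace of $\ol{A}_{\ol{\nabla}\ol{r}_L}$. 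This produces the $-D_x/\ol{r}_L(x)$ term with no error whatsoever. The complementary contribution $\ol{\tr}\,\ol{A}_{\ol{\nabla}\ol{r}_L}|_{V_2}$ converges, as $x\to L$, to $(\ol{\tr}\,\ol{A}_v)_p$, the trace of the second fundamental form of $L$ itself at $p=P(x)$ in direction $v$; this is uniformly bounded and supplies $C_L$. If you want to salvage your write-up, the fix is to notice that your sphere argument already works verbatim inside the Euclidean slice $S_p\subset\RR^n$, so there is no need to pass through the infinitesimal foliation or to compare metrics at all.
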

\begin{proof}
Let $\epsilon$ be small enough that the normal exponential map $\exp:\nu^{\leq\epsilon}L\to \Tub_{\epsilon}(L)$ is a diffeomorphism, and let $P: \Tub_{\epsilon}(L)\to L$ denote the metric projection. For every $p\in L$, $S_p=\exp_p(\nu_p^{\leq \epsilon}L)$ is the slice of $\fol$ at $p$. For $\epsilon$ small enough the distribution $V_1(x)=T_xL_x\cap T_xS_p$, $p=P(x)$, has dimension $D_x=\dim L_x-\dim L$ and hence  codimension $\dim L$ in $T_xL_x$. Let $V_2(x)\In T_xL_x$ denote the orthogonal complement of $V_1(x)$. Then the following are satisfied:
\begin{enumerate}
\item $T_xL_x=V_1(x)\oplus V_2(x)$ is an orthogonal decomposition for every $x\in \Tub_{\epsilon}(L)$.
\item $V_2$ is a regular distribution which coincides with $T_pL$ for every $p\in L$.
\item By Lemma \ref{focal},  $V_1(x)$ corresponds to the eigenspace of $\ol{A}_{\ol{\nabla}\ol{r}_L}$ with eigenvalue $-\frac{1}{ \ol{r}_L}$. In particular, $V_2(x)$ consists of a sum of eigenspaces of $\ol{A}_{\ol{\nabla}\ol{r}_L}$.
\end{enumerate}
It follows that $\ol{\tr}\,\ol{A}_{\ol{\nabla}\ol{r}_L}=\ol{\tr}\,\ol{A}_{\ol{\nabla}\ol{r}_L}\big|_{V_1}+ \ol{\tr}\,\ol{A}_{\ol{\nabla}\ol{r}_L}\big|_{V_2}$ and, for every $x=\exp_pv$ in $\Tub_{\epsilon}(L)$,
\begin{align*}
\left(\ol{\tr}\,\ol{A}_{\ol{\nabla}\ol{r}_L}\big|_{V_1(x)}\right)_x=-\frac{D_x}{ \ol{r}_L(x)},\quad\qquad
\left|\left(\ol{\tr}\,\ol{A}_{\ol{\nabla}\ol{r}_L}|_{V_2(x)}\right)_x- \left(\ol{\tr}\,\ol{A}_{v}\right)_p\right|<\delta
\end{align*}

where $\delta=\delta(\epsilon)$ is arbitrarily small. The result follows, by letting $C_L=\sup_{v\perp L, \|v\|=1}\left(\ol{\tr}\,\ol{A}_{v}\right)+\delta$.
\end{proof}

\begin{remark} \label{R:homothetic-invariant} Suppose that every leaf $L$ of $(\RR^n,\fol)$ splits isometrically as $V\times L^{\perp}$, where $V$ is a fixed totally geodesic leaf of $\fol$ and $L^\perp\In V^{\perp}$. The homothetic transformations $h_{\lambda}$ at $V$ act on $\RR^n=V\times V^{\perp}$ by fixing $V$ and rescaling the $V^{\perp}$ factor. In particular,
\[
(h_{\lambda})_*\left(\ol{A}_{\ol{\nabla}\ol{r}_L}\right)=\frac{1}{ \lambda}\ol{A}_{\ol{\nabla}\ol{r}_{\lambda L}}
\]
where $\lambda L=h_{\lambda}(L)$. In particular, if $C_L$ satisfies equation \eqref{E:bound-A-on-flat} on $\Tub_{\epsilon}(L)$, then $C_{\lambda L}=\frac{1}{ \lambda}C_L$ satifies equation \eqref{E:bound-A-on-flat} for $\lambda L$, in $\Tub_{\lambda \epsilon}(\lambda L)$. If we define $c_L=\frac{C_L}{ \ol{r}(L)}$, where $\ol{r}(x)=\dist(x,V)$, then $c_L$ becomes invariant under homothetic transformations ($c_L=c_{\lambda L}$) and equation \eqref{E:bound-A-on-flat} becomes
\begin{equation}\label{E:bound-A-on-flat2}
-\frac{D_x}{ \ol{r}_L(x)}-\frac{c_L}{ \ol{r}(x)}\leq\left(\ol{\tr}\,\ol{A}_{\ol{\nabla}\ol{r}_L}\right)_x\leq -\frac{D_x}{ \ol{r}_L(x)}+\frac{c_L}{ \ol{r}(x)}\quad \forall x\in \Tub_{\epsilon_L}(L).
\end{equation}
Clearly, if $(L,\epsilon_L, c_L)$ satisfy equation \eqref{E:bound-A-on-flat2}, then $(\lambda L, \lambda \epsilon_L, c_L)$ satisfy equation \eqref{E:bound-A-on-flat2} as well, for every $\lambda$.
\end{remark}

The next lemma holds for generic Riemannian metrics.

\begin{lemma}
\label{lemma-constant-c2}
Let $(M,\F)$ be a singular Riemannian foliation with compact leaves on a complete Riemannian manifold and let $L_q$ be a singular leaf. Fix $\epsilon>0$ small enough. Then for any $L$ in $\tub_{\epsilon}(L_q)$, there is a radius $\epsilon_L$ and a constant $k_L$ such that in the regular part of $\tub_{\epsilon_L}(L)$ the following holds
\begin{equation}\label{Lemma-bounds_2}
-(1+\delta)\frac{D_{L}}{\ol{\DR}_{ L}(x)}-\frac{k_L}{ \ol{r}(x)}\leq \tr (A_{\nabla\ol{\DR}_{ L}})_x\leq -(1-\delta)\frac{D_{L}}{\ol{\DR}_{ L}(x)}+\frac{k_L}{ \ol{r}(x)}.
\end{equation}
Here the constant $\delta$ only depends on $L_q$ and $\epsilon$, while $k_L$ is homothety invariant (i.e. $k_L=k_{\lambda L}$).
\end{lemma}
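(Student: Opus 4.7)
The plan is to transfer the flat-metric bound \eqref{E:bound-A-on-flat2} of Remark \ref{R:homothetic-invariant} to the Riemannian setting, by working locally inside distinguished tubular neighborhoods of points of $L_q$ (cf.\ Section \ref{SS:distinguished tub neighb}).

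First I would cover the compact leaf $L_q$ by finitely many distinguished tubular neighborhoods $O_i = \Tub_{\delta_i}(P_i)$, with $q_i \in P_i \subset L_q$, chosen small enough that the connection difference tensor $\omega_i = \nabla - \ol{\nabla}_i$ and the metric comparison tensor $G_i - I$ (where $g(\cdot,\cdot) = \ol{g}_i(G_i\cdot,\cdot)$) both satisfy $\|\omega_i\|, \|G_i - I\| \leq \eta$ on $O_i$, for a quantity $\eta = \eta(\epsilon)$ with $\eta \to 0$ as $\epsilon \to 0$. This is possible since both tensors vanish at $q_i$ and are smooth. Inside each $O_i$, applying Remark \ref{R:homothetic-invariant} to the pullback flat-model foliation $\fol_{q_i}$ yields, for each leaf $L$ meeting $O_i$, a homothety-invariant constant $c_L^{(i)}$ and a flat-metric radius $\ol{\epsilon}_L^{(i)}$ such that \eqref{E:bound-A-on-flat2} holds on the corresponding flat tube around $L \cap O_i$.

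The key step is to compare $\tr(A_{\nabla \ol{r}_L})$ in the $g$-metric with $\ol{\tr}(\ol{A}_{\ol{\nabla}\ol{r}_L})$ in the flat metric. Using the decomposition $\nabla_X \xi = \ol{\nabla}_X \xi + \omega_i(\xi,X)$, the identity $\nabla \ol{r}_L = G_i^{-1}\ol{\nabla}\ol{r}_L$, and the fact that the $g$- and $\ol{g}_i$-orthogonal projections onto $T_pL$ differ in operator norm by $O(\|G_i - I\|)$, one obtains an estimate of the form
\[
\bigl\|A_{\nabla \ol{r}_L} - \ol{A}_{\ol{\nabla}\ol{r}_L}\bigr\| \;\leq\; C_1\|\omega_i\| + C_2\|G_i - I\|\cdot\bigl\|\ol{A}_{\ol{\nabla}\ol{r}_L}\bigr\|,
\]
for constants depending only on ambient data. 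Taking the trace on $T_pL$ and bounding $\|\ol{A}\|$ by $D_L/\ol{r}_L + c_L^{(i)}/\ol{r}$ via \eqref{E:bound-A-on-flat2}, the error splits into a multiplicative piece of size $O(\eta\cdot D_L/\ol{r}_L)$, which is absorbed into a factor $(1 \pm \delta)$ on the main term with $\delta = O(\eta)$, and an additive piece of order $O(\eta) + O(\eta\cdot c_L^{(i)}/\ol{r}) + c_L^{(i)}/\ol{r}$. Using $\ol{r} \leq \epsilon$ on $\tub_\epsilon(L_q)$, the latter is absorbed into a single term $k_L^{(i)}/\ol{r}$. Taking the maximum over the finite cover yields the required uniform constants $\delta$ and $k_L$, valid on $\tub_{\epsilon_L}(L)$ with $\epsilon_L = \min_i \ol{\epsilon}_L^{(i)}$.

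The main obstacle is verifying the homothety invariance of $k_L$. The flat-case constant $c_L^{(i)}$ is invariant by construction in Remark \ref{R:homothetic-invariant}, while the remaining contributions to $k_L^{(i)}$ are proportional to $\|\omega_i\|\epsilon$ and $\|G_i - I\|$, which depend only on the fixed tube $O_i$ and not on the particular leaf $L$. Since the homothety $h_\lambda : O_i \to O_i$ preserves the tube (property (4) of Section \ref{SS:distinguished tub neighb}), these tube-dependent quantities appear identically in the estimates for $L$ and for $\lambda L$, so $k_L = k_{\lambda L}$, as required.
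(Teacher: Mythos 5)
Your proof follows the paper's approach essentially step for step: fix distinguished tubular neighbourhoods around $L_q$, compare the $g$-shape operator with the $\ol g$-shape operator via the connection-difference tensor $\omega=\nabla-\ol\nabla$ and the metric comparison $G$, feed in the flat bound \eqref{E:bound-A-on-flat2} from Remark \ref{R:homothetic-invariant}, and verify homothety invariance of $k_L$ from homothety invariance of $c_L$ plus the fact that the remaining contributions depend only on the tube, not on $L$. Your explicit finite-cover step is a reasonable way to make the paper's ``fix a single distinguished tube'' argument rigorous (and compatible with Remark \ref{R:bar-r-is-global}). Two minor over-assertions worth flagging, neither of which affects the conclusion: you claim $\|\omega_i\|\le\eta$ with $\eta\to 0$ because $\omega_i$ ``vanishes at $q_i$,'' but $\varphi$ is only a \emph{modification} of the normal exponential map, so $\omega_i(q_i)=0$ is not automatic --- the paper only needs $G_i-I\to 0$ (giving the multiplicative $\delta$) while $\omega_i$ just needs to be \emph{bounded}, feeding the additive constant $c$; and your operator-norm bound on $A_{\nabla\ol r_L}-\ol A_{\ol\nabla\ol r_L}$ implicitly requires control of the full ambient derivative $\ol\nabla\ol\nabla\ol r_L$, including its normal-connection part, whereas the paper's inequality \eqref{E:traces-ineq} is stated directly at the trace level, where pairing against tangent vectors kills the normal components for free.
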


\begin{proof}  Fix a distinguished tubular neighbourhood $O_{\epsilon}$ around some point in $L_q$, and let $\ol{g}$ denote the flat metric. In the following, every overlined geometrical quantity is computed with respect to $\ol{g}$.
Using $\nabla=\ol{\nabla}+\omega$ and $g(x,y)=\ol{g}(Gx,y)$, it is not hard to prove that there are constants $\delta, c$ depending only on $L_q$ and $\epsilon$, with $\lim_{\epsilon\to 0}\delta=0$, such that $g$ and $\ol{g}$ are $\delta$-close in the $C^0$-topology and
\begin{equation}\label{E:traces-ineq}
\left(1-\delta\right)\left|\ol{\tr}\,\ol{A}_{\ol{\nabla}\ol{\DR}_{ L}}\right|-c\leq \left|\tr A_{\nabla \ol{\DR}_{ L}}\right|\leq \left(1+{\delta}\right)\left|\ol{\tr}\,\ol{A}_{\ol{\nabla}\ol{\DR}_{ L}}\right|+c.
\end{equation}
Since the metric $\ol{g}$ splits as in Remark \ref{R:homothetic-invariant}, we obtain that for every $L$ there is a homothety invariant $c_L$ and a small $\epsilon_L$ such that equation \eqref{E:bound-A-on-flat2} applies. Using equation \eqref{E:traces-ineq}, we obtain
\begin{equation}\label{E:ineq-complete}
-(1+\delta)\frac{D_{L}}{\ol{\DR}_{ L}(x)}-(1+\delta)\frac{c_L}{ \ol{r}(x)}-c\leq \tr (A_{\nabla\ol{\DR}_{ L}})_x\leq -(1-\delta)\frac{D_{L}}{\ol{\DR}_{ L}(x)}+(1-\delta)\frac{c_L}{ \ol{r}(x)}+c,
\end{equation}

By setting $k_L=(1+\delta)c_L+\epsilon c$ we obtain the result.

\end{proof}

In the particular case of $L=L_q$, we can choose $\epsilon_L=\epsilon$ and follow the same steps as above, noticing that in this case $\ol{\DR}_{ L}=\DR$ and thus $\nabla\ol{\DR}_{ L}=\nabla\DR$. Moreover, in this case we get $C_L=c_L=0$, thus from equation \eqref{E:ineq-complete} we get the following

\begin{corollary}\label{corollary-constant-c2}
Let $(M,\F)$ be a singular Riemannian foliation with compact leaves on a complete Riemannian manifold and let $L_q$ be a singular leaf. For $\epsilon>0$ small enough, there exist constants $\delta,c$ such that in the regular part of $\tub_{\epsilon}(L_q)$
\begin{equation}\label{corollary-bounds_1}
-(1+\delta)\frac{D}{\DR(x)}-c\leq \tr (A_{\nabla\DR})_x\leq -(1-\delta)\frac{D}{\DR(x)}+c.
\end{equation}
where $D=\dim\fol-\dim L_q$ and $r=\dist_{L_q}$.
\end{corollary}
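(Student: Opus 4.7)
The plan is to specialize Lemma \ref{lemma-constant-c2} to the degenerate case $L=L_q$. The hypothesis is stated for $L$ inside $\tub_{\epsilon}(L_q)$, which includes $L_q$ itself, and the proof of the lemma applies in the same way; I would simply re-run the argument, noting two simplifications that collapse the lemma's bound to the cleaner form stated in the corollary.

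First I would verify that $\ol r_{L_q}=r$ inside any distinguished tubular neighbourhood $O_{\epsilon}$ around a point $q'\in L_q$. By property (4) of such a neighbourhood, $\varphi:O_{\epsilon}\to T_{q'}M$ sends radial geodesics emanating from $L_q$ to radial rays emanating from $T_{q'}L_q$, so the flat distance from the linear subspace $T_{q'}L_q$ coincides with the Riemannian distance from $L_q$. By Remark \ref{R:bar-r-is-global}, this identification extends consistently across the whole tube, hence $\ol r_{L_q}=r$ and $\ol\nabla\ol r_{L_q}=\nabla r$ both as functions and gradient fields.

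Next I would show that the homothety-invariant constant $c_{L_q}$ from the lemma vanishes. Retracing the proof of equation \eqref{E:bound-A-on-flat}, the constant $C_L$ arises essentially as $\sup_{v\perp L,\|v\|=1}\ol\tr\ol A_v$ (plus an arbitrarily small correction), where $\ol A_v$ is the shape operator of $L$ in the direction of the unit normal $v$ with respect to the flat metric. For $L=L_q$, property (3) of the distinguished neighbourhood identifies $L_q$ under $\varphi$ with the linear subspace $T_{q'}L_q\subset T_{q'}M$, which is totally geodesic and flat in the Euclidean metric. Therefore $\ol A_v\equiv 0$ for every unit normal $v$ to $L_q$, so $C_{L_q}=0$ and consequently $c_{L_q}=0$ via Remark \ref{R:homothetic-invariant}.

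Plugging $\ol r_L=r$ and $c_L=0$ into inequality \eqref{E:ineq-complete} makes the middle term vanish on both sides and yields precisely the corollary's bound, with the same $\delta$ and $c$ that arose from the $C^0$-comparison between $g$ and the flat metric $\ol g$. The main obstacle is really just identifying the vanishing of $C_{L_q}$ from the product structure of the distinguished tubular neighbourhood; everything else is a direct specialization of the previous lemma.
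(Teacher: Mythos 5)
Your proposal is correct and takes essentially the same route as the paper: specialize Lemma \ref{lemma-constant-c2} to $L=L_q$, observe that $\ol{r}_{L_q}=r$ because $\varphi$ carries radial geodesics around $L_q$ to radial rays around $T_{q'}L_q$, and observe that $C_{L_q}=c_{L_q}=0$ because $L_q$ is totally geodesic for the pulled-back flat metric (and, by the product structure of property (3), the $V_2$-contribution vanishes identically, so the ``arbitrarily small correction'' $\delta$ in the definition of $C_L$ is also exactly zero for $L=L_q$, which matters since otherwise $c_{L_q}/\ol{r}(x)$ would blow up as $x\to L_q$). The paper states these two simplifications without justification; you supply the reasoning, but the structure of the argument is identical.
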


\begin{remark}
The above corollary implies that  there is no Riemannian metric on $M$, adapted to 
a singular Riemannian foliation $\F$ with compact leaves, 
for which all the leaves of $\F$ are minimal submanifolds; see also Miquel and Wolak \cite{MiguelWolak}.  
\end{remark}

\begin{corollary}\label{C:bound_r_Sigma}

Let $M,\fol, L_q$ be as in Lemma \ref{lemma-constant-c2} and assume that $\F$ is generalized isoparametric.
Let $\tub_{\epsilon}(L_q)$ be a tubular neighbourhood of $L_q$ with radius $\epsilon$ small enough and let $\mathcal{M}$ be the union of the singular leaves in $\tub_{\epsilon}(L_q)$. Then 
there exists a foliated neighbourhood $U$ of $\mathcal{M}\setminus L_q$ with the following two properties:
\begin{enumerate}
\item There exists a constant $C$ such that for any $x\in \tub_{\epsilon}L_q\setminus U$, $\dist(x,\mathcal{M})>C\,\dist(x,L_q)$.
\item for any regular leaf $L_0\in U$, the MCF evolution $L(t)$ with $L(0)=L_0$ does not converge to $L_q$.
\end{enumerate}
\end{corollary}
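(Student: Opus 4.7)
The plan is to build $U$ as a union of tubular neighborhoods around the singular leaves $L'\In\mathcal{M}\setminus L_q$, with radius $\alpha\,\ol{r}(L')$ linear in the flat-metric distance $\ol{r}(L')$ from $L'$ to $L_q$. This scaling is exactly what the homothety-invariant constant $k_L$ of Lemma \ref{lemma-constant-c2} naturally produces, and it is what lets (1) and (2) hold simultaneously: wide enough tubes to trap the MCF around each $L'$, and narrow enough that their union misses a cone of fixed relative radius around $L_q$.

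Fix a singular leaf $L'\In\tub_\epsilon(L_q)\setminus L_q$. Since $(M,\fol)$ has finitely many singular strata and $k_{L'}$ is homothety invariant (hence depends only on the stratum of $L'$), there is a single constant $k$ dominating all the $k_{L'}$. Whenever $\ol{\DR}_{L'}(x)\leq \alpha\,\ol{r}(L')$, the triangle inequality gives $\ol{r}(x)\geq (1-\alpha)\ol{r}(L')$, and the upper bound in \eqref{Lemma-bounds_2} becomes
\[
\tr(A_{\nabla\ol{\DR}_{L'}})_x \;\leq\; -(1-\delta)\frac{D_{L'}}{\ol{\DR}_{L'}(x)} + \frac{k}{(1-\alpha)\ol{r}(L')}.
\]
Choose $\alpha\in(0,1)$, independently of $L'$, so that $k\alpha < (1-\delta)(1-\alpha)$; this is possible for $\alpha$ small and suffices because $D_{L'}\geq 1$. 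The displayed bound is then strictly negative throughout $\tub_{\alpha\ol{r}(L')}(L')$. Running the argument of Proposition \ref{P:r_t} verbatim, with $L'$ in place of $L_q$ and the flat-metric radial function $\ol{\DR}_{L'}$ in place of $r$ (note that $\ol{\DR}_{L'}$ is constant along the leaves of $\fol$ because $\fol$ is Riemannian in the flat metric of a distinguished tubular neighbourhood), one sees that any MCF starting at a regular leaf $L_0\in\tub_{\alpha\ol{r}(L')}(L')$ stays in this tube and develops a finite-time singularity at some singular leaf inside it. Since this tube is disjoint from $L_q$, its limit cannot be $L_q$.

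Set
\[
U \;=\; \bigcup_{L'\In\mathcal{M}\setminus L_q} \tub_{\alpha\ol{r}(L')}(L'),
\]
which is $\fol$-saturated by construction, and the previous paragraph gives (2) immediately. For (1), let $x\in\tub_\epsilon(L_q)\setminus U$ and let $L'\In\mathcal{M}\setminus L_q$ be arbitrary. By definition of $U$, $\dist(x,L')\geq \alpha\,\ol{r}(L')$, and the triangle inequality gives $\ol{r}(L')\geq \ol{r}(x)-\dist(x,L')$. Combining, $\dist(x,L')\geq \tfrac{\alpha}{1+\alpha}\ol{r}(x)$, hence $\dist(x,\mathcal{M}\setminus L_q)\geq \tfrac{\alpha}{1+\alpha}\ol{r}(x)$. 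The $C^0$-comparability of $\ol{r}$ and $\dist(\cdot,L_q)$ on $\tub_\epsilon(L_q)$ (already exploited in the proof of Lemma \ref{lemma-constant-c2}) now yields the conclusion with some $C<1$; the inequality $\dist(x,\mathcal{M})\geq C\dist(x,L_q)$ then follows automatically, since $L_q\In\mathcal{M}$.

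The main obstacle is the uniformity of $\alpha$ across all singular leaves $L'$ in $\tub_\epsilon(L_q)\setminus L_q$, some of which may approach $L_q$ arbitrarily closely. The homothety invariance of $k_L$ in Lemma \ref{lemma-constant-c2}, combined with the finiteness of strata of $(M,\fol)$, is what makes the uniform choice possible; without it the tube radii $\alpha\ol{r}(L')$ could collapse faster than $\ol{r}(L')$ as $L'\to L_q$, and property (1) would break down.
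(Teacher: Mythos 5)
Your construction of $U$ as a union of tubes around the singular leaves $L'$ with radius proportional to $\ol{r}(L')$, the conicality argument for (1), and the trapping argument via Lemma \ref{lemma-constant-c2} for (2), are all essentially identical to the paper's own proof; the differences are cosmetic (you use an explicit $\alpha$ and spell out the triangle inequality $\ol{r}(x)\geq(1-\alpha)\ol{r}(L')$, which the paper leaves implicit).

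One justification in your write-up is not quite right, though the conclusion it supports is true. You claim the uniform constant $k$ exists because \virg{$k_{L'}$ is homothety invariant, hence depends only on the stratum of $L'$}. Homothety invariance only says $k_{L'}=k_{\lambda L'}$ along a fixed ray from $L_q$; it does not imply $k_{L'}$ is constant on a stratum, since $c_{L'}$ is built from $\sup_v \ol{\tr}\,\ol{A}_v$ and two leaves in the same stratum at the same distance from $L_q$ can have different shape operators. What actually gives the uniform bound is compactness: by homothety invariance you may restrict attention to singular leaves at a fixed flat distance from $L_q$, and these form a compact family on which $L'\mapsto k_{L'}$ is continuous, hence bounded. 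The paper glosses over this step as well, so this is a minor imprecision rather than a gap in the argument.
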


\begin{proof}
Let $\mathcal{L}$ denote the set of singular leaves in $\ol{\tub_{\epsilon}}(L_q)$, and define
\[
U=\bigcup_{L\in \mathcal{L}}\ol{\tub}_{\epsilon_L}(L).
\]
Here the tubes $\ol{\tub_{\epsilon}}(L_q), \ol{\tub_{\epsilon}}(L)$ are defined using the distance functions $\ol{r}(p)=\ol{\dist}(L_q,p)$ and $\ol{r}_L(p)=\ol{\dist}(L,p)$ (see Remark \ref{R:bar-r-is-global}), while $\epsilon_L$ is some radius satysfying Lemma \ref{lemma-constant-c2} and rescaling linearly under $\ol{g}$-homothetic transformations. In this way, for any distinguished tubular neighbourhood $O_{\epsilon}=P\times S$ around $L_q$, the restriction $U\cap O_{\epsilon}$ has the form
 $P\times\{\text{conical open set in }S\}$. Clearly there is some constant $C'$ such that $\ol{\dist}(x,\mathcal{M})>C'\ol{\DR}_{ L}(x)$ for every $x$ in $O_{\epsilon}$. Since the metrics $g, \ol{g}$ are equivalent, the first statement follows.
 
 In order to prove the second statement, we choose $\epsilon_L<\frac{(1-\delta)k_L}{ D_L}\ol{r}(L)$. Notice that the right hand side of the inequality rescales linearly under $\ol{g}$-homothetic transformations, thus we can still choose $\epsilon_L$ with the same property. Let $L(t)$ be a MCF evolution with initial datum $L_0\In U$. Then $L_0$ belongs to $\tub_{\epsilon_L}(L)$ for some singular leaf $L\In U$. If we define $\ol{r}_L(t)=\ol{r}_L(L(t))$, by Lemma \ref{lemma-constant-c2} we obtain
 \[
 \ol{\DR}_{L}'(t)=\tr A_{\nabla\ol{\DR}_{ L}}< -(1-\delta)\frac{D_{L}}{\ol{\DR}_{ L}(x)}+\frac{k_L}{ \ol{r}(x)}.
 \]
Since $\ol{r}_L<\epsilon_L<\frac{(1-\delta)k_L}{ D_L}\ol{r}(L)$, we obtain $\ol{r}_L'(t)<0$ and therefore $L(t)$ never leaves $\ol{\tub_{\epsilon_L}}(L)$.
\end{proof}


\section{Isoparametric foliations in nonnegative curvature}
\label{section-polarfoliation-nonnegative}

The goal of this section is to prove Theorem \ref{T:finite-time-sing} which we restate here.
\begin{theorem}\label{P:polar-fol}
Let $(M,\fol)$ be a  isoparametric foliation (i.e., polar and generalized isoparametric)   on a compact nonnegatively curved manifold. Then for every non minimal regular leaf $L_0$, the MCF $L(t)$ with initial datum $L_0$ has finite time singularity.
\end{theorem}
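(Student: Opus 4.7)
The plan is to reduce the MCF to the negative gradient flow of $\log\vol$ on a section and exploit a concavity property in nonnegative curvature. Let $\Sigma$ be a section of $(M,\fol)$: a totally geodesic submanifold meeting each regular leaf orthogonally, with $T_p\Sigma=\nu_pL_p$ for every $p\in\Sigma\cap M_{reg}$. Since $H$ is basic and $T_p\Sigma=\nu_pL_p$, $H$ is tangent to $\Sigma$ on $\Sigma\cap M_{reg}$; hence, choosing $p_0\in L_0\cap\Sigma$, the integral curve $\gamma(t):=\varphi_t(p_0)$ remains in $\Sigma$, satisfies $\gamma'(t)=H(\gamma(t))$, and parametrizes $L(t)=L_{\gamma(t)}$. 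Setting $v(p):=\vol(L_p)$, the first variation of volume (together with basicness of $H$) yields $H=-\nabla_\Sigma\log v$, so $\gamma$ is the negative gradient flow of $\log v$ on $\Sigma\cap M_{reg}$. Since singular leaves have strictly smaller dimension than regular ones, $\log v(p)\to-\infty$ as $p$ approaches the singular stratum.

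The heart of the proof is the concavity of $\log v$ along geodesics in $\Sigma\cap M_{reg}$. Given a unit-speed geodesic $\sigma\subset\Sigma\cap M_{reg}$, the polar structure guarantees that $\sigma'(s)$ is parallel-transported in $M$ along $\sigma$ and remains normal to every parallel leaf $L_{\sigma(s)}$. Hence the shape operators $A(s):=A_{\sigma'(s)}$ satisfy a Riccati equation $\dot A=A^{2}+R_{\sigma'}|_{TL_{\sigma(s)}}$, where $R_v(X)=R(X,v)v$. Using $(\log v)'(s)=-\tr A(s)$ and tracing the Riccati equation, one obtains
\begin{equation*}
(\log v)''(\sigma(s))=-\tr(A(s)^{2})-\Ric_{L_{\sigma(s)}}(\sigma'(s),\sigma'(s))\leq 0,
\end{equation*}
since $\Ric_{L}(v,v):=\tr_{TL}R(\cdot,v)v\ge 0$ under $\sec_M\ge 0$. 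Consequently every critical point of $\log v$ in $\Sigma\cap M_{reg}$ is a local maximum.

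Assume for contradiction that $T=\infty$. Along $\gamma$ we have $(\log v\circ\gamma)'(t)=-|H(\gamma(t))|^{2}\le 0$, so $\log v(\gamma(t))\searrow\ell\in[-\infty,\log v(L_0)]$. If $\ell=-\infty$ then $v(\gamma(t))\to 0$, so $\gamma$ accumulates at the singular stratum; in particular $\gamma(t_0)\in\Tub_{\epsilon(L_q)}(L_q)$ for some singular leaf $L_q$ at some finite $t_0$, and Proposition~\ref{P:r_t}(b) forces $T<\infty$, a contradiction. If $\ell>-\infty$, then $\int_0^\infty|H|^2\,dt=\log v(L_0)-\ell<\infty$ and $\gamma$ stays in a compact subset of $\Sigma\cap M_{reg}$; thus some sequence $t_n\to\infty$ has $|H(\gamma(t_n))|\to 0$, and a subsequential limit $p^{*}\in\Sigma\cap M_{reg}$ is a critical point of $\log v$, hence a local maximum by concavity. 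On a neighbourhood $U$ of $p^*$, $\log v\le\log v(p^*)=\ell$; combined with $\log v\circ\gamma\searrow\ell$, this forces $\log v\circ\gamma\equiv\ell$ for all large $t$, hence $H(\gamma(t))\equiv 0$ eventually, so $\gamma$ is eventually the stationary solution at $p^*$. By uniqueness of solutions of $\gamma'=-\nabla\log v$, $\gamma(t)=p^*$ for all $t$, contradicting the assumption that $L_0$ is not minimal.

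The main obstacle is the concavity step: one has to verify that in the polar setting, parallel transport in $M$ along a geodesic $\sigma\subset\Sigma$ preserves the normal bundle $\nu\fol$, so that the shape operators $A_{\sigma'(s)}$ can be tracked as a single Riccati family on the parallel leaves $L_{\sigma(s)}$. Once this is combined with $\Ric_{L}\ge 0$ coming from $\sec_M\ge 0$, the inequality $(\log v)''\le 0$ follows and the gradient-flow dichotomy described above closes the argument.
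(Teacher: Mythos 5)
Your proof is correct, and it rests on the same two pillars as the paper's argument: the gradient-flow identity $H=-\nabla\log\vol$ (Lemma~\ref{L:no-finite-time-singularity}(1)) and a Riccati/holonomy--Jacobi-field comparison in nonnegative curvature, which is available precisely because polarity kills the O'Neill term. The packaging, however, is genuinely different. The paper proves, by comparing the holonomy-Jacobi determinant $j_s$ with its Euclidean analogue $\bar j_s$ and integrating over $L'$, that $\vol(L)\le\vol(L')$ for every leaf $L$ in a definite tube of radius $1/\|A\|_\infty$ around a minimal regular leaf $L'$, and then reaches the contradiction via Lemma~\ref{L:no-finite-time-singularity}(2), which produces leaves of strictly larger volume converging to a minimal one. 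You instead derive the sharper pointwise statement that $\log\vol$ is concave along every geodesic $\sigma$ in a section, via $A'=A^2+R_{\sigma'}|_{T\fol}$ and $\Ric_{L}(\sigma',\sigma')\ge0$, which makes every critical leaf a local maximum, and then close with a clean gradient-flow dichotomy on $\ell=\lim\log\vol\circ\gamma$ (the $\ell=-\infty$ branch feeding into Proposition~\ref{P:r_t}(b), the $\ell>-\infty$ branch feeding into backward uniqueness of the gradient ODE). The two routes buy roughly the same thing: your concavity statement is conceptually tidier and more localized, while the paper's volume inequality is slightly more quantitative. The subtlety you flag at the end---that parallel transport in $M$ along a section geodesic preserves $\nu\fol$, so that the family $A_{\sigma'(s)}$ can be differentiated as a single Riccati family on $T\fol$---is indeed exactly where the isoparametric (polar) hypothesis enters, via total geodesy of the section, and it plays the same role in the paper's holonomy-Jacobi-field computation.
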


We start by proving a few lemmas.

\begin{lemma}\label{L:no-finite-time-singularity}
Let $(M, \fol)$ be a closed, generalized isoparametric, singular Riemannian foliation on a compact manifold.
\begin{enumerate}
\item If $\mathrm{vol}:M_{reg}\to \RR$ denotes the volume function $x\mapsto \mathrm{vol}(L_x)$ then $H=-\nabla (\log\mathrm{vol})$ in $M_{reg}$.
\item Fixing a regular leaf $L_0$, suppose that the MCF $L(t)$ with $L(0)=L_0$ does not have a finite time singularity. Then there exists a sequence of leaves $L_i$ converging to a minimal regular leaf $L'$ in the Hausdorff sense, such that $\mathrm{vol}(L_i)>\mathrm{vol}(L')$.
\end{enumerate}
\end{lemma}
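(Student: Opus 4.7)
The plan is to read the mean curvature flow, via part (1), as the negative gradient flow of $\log\mathrm{vol}$ on the regular leaf space $M_{reg}/\fol$, and then use Proposition \ref{P:r_t} to control the asymptotic behaviour of the flow.

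For (1), I start from the observation that since $H$ is basic, any horizontal basic vector field $X$ on $M_{reg}$ has a flow $\phi^X_s$ that sends leaves to leaves. The first variation of area along this flow gives
\[
X\bigl(\mathrm{vol}(L_\cdot)\bigr)(x) \;=\; \frac{d}{ds}\Big|_{s=0}\mathrm{vol}\bigl(\phi^X_s(L_x)\bigr) \;=\; -\int_{L_x}\langle H,X\rangle\,dL_x.
\]
Because both $H$ and $X$ are basic, $\langle H,X\rangle$ is constant on $L_x$, so the right-hand side equals $-\langle H,X\rangle(x)\cdot\mathrm{vol}(L_x)$. Dividing by $\mathrm{vol}(L_x)$ and using that $\nabla\log\mathrm{vol}$ is automatically horizontal (since $\log\mathrm{vol}$ is constant along leaves), I conclude $\nabla\log\mathrm{vol}=-H$.

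For (2), the identity from (1) turns the MCF into the negative gradient flow of $\log\mathrm{vol}$ in $M_{reg}/\fol$; in particular
\[
\frac{d}{dt}\log\mathrm{vol}(L(t)) \;=\; -|H|^2(L(t)),
\]
so $\mathrm{vol}(L(t))$ is monotone non-increasing. The crucial step is to show $\mathrm{vol}(L(t))\geq V_\infty>0$ for all $t$. If this failed, then the monotone convergence $\mathrm{vol}(L(t))\to 0$ together with compactness of $M/\fol$ would produce a Hausdorff subsequential limit $L^\ast$ which, by continuity and strict positivity of $\mathrm{vol}$ on the regular stratum, would have to be singular. But by Hausdorff convergence $L(t)$ would then enter every tube around $L^\ast$, and Proposition \ref{P:r_t} would force $T<\infty$, contradicting the hypothesis. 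Once $V_\infty>0$ is secured, integrating the energy identity gives $\int_0^\infty|H|^2(L(t))\,dt<\infty$, so I can extract $t_i\to\infty$ with $|H|^2(L(t_i))\to 0$. Passing to a further subsequence, $L(t_i)$ converges in the Hausdorff sense to some $L'\in M/\fol$, which must again be regular by the same tube argument, and continuity of $H$ on $M_{reg}/\fol$ forces $|H|^2(L')=0$, i.e.\ $L'$ is minimal. Strict monotonicity of $\mathrm{vol}$ along the non-stationary portion of the flow (the only relevant case, since a stationary flow would already have $L_0=L'$) then delivers $\mathrm{vol}(L_i)>\mathrm{vol}(L')$ for $L_i=L(t_i)$ (or for $L_i=L(T_0-1/i)$ if the flow becomes stationary at some finite time $T_0$).

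The main obstacle is the volume lower bound $V_\infty>0$: this is where Proposition \ref{P:r_t} does its essential work, converting potential volume collapse into a forbidden finite-time singularity. The remainder is a fairly standard combination of first variation of area, compactness of the leaf space, and continuity of $H$ and $\mathrm{vol}$ on the regular stratum.
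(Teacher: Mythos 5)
Your proof is correct and follows essentially the same route as the paper: part (1) via first variation of volume along basic vector fields using that $\langle H, X\rangle$ is leafwise constant, and part (2) by combining the gradient-flow structure of $\log\mathrm{vol}$ with Proposition~\ref{P:r_t} to confine the flow to a compact subset of $M_{reg}$, then extracting a subsequence with $\|H\|^2\to 0$. The only organizational difference is that you first establish the volume lower bound $V_\infty>0$ by contradiction and then extract the limit, whereas the paper directly argues that the flow stays in a relatively compact subset $M\setminus U\subset M_{reg}$ and reads off the volume bound from that; one minor slip in your proposal is the parenthetical ``if the flow becomes stationary at some finite time $T_0$'' — by ODE uniqueness for the projected flow on $M_{reg}/\fol$, this cannot happen unless $L_0$ is already minimal (the paper implicitly excludes this case, as does Theorem~\ref{T:finite-time-sing}).
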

\begin{proof}
1) Let $\omega$ denote the volume form of the regular leaves. By \cite[Prop. 4.1.1]{GromollWalschap}, given a basic vector field $X$ along a regular leaf $L_p$, we obtain
\begin{align*}
X(\mathrm{vol})(p)&=\int_{L_p}\mathcal{L}_X(\omega)\\
&=-\int_{L_p}\scal{X,H}\omega\\
&=-\scal{X,H}\mathrm{vol}(p)
\end{align*}
where the last equality holds because both $X$ and $H$ are basic, and therefore $\scal{X,H}$ is constant along $L_p$. Dividing the equation by $\mathrm{vol}(p)$ we obtain
\[
\scal{X, \nabla (\log\mathrm{vol}) (p)}=X(\log \mathrm{vol})(p)=-\scal{X,H}
\]
hence the result.

2) From Proposition \ref{P:r_t}, there is a neighbourhood of the singular set $U$ such that every MCF entering $U$ has a finite time singularity, and therefore our flow $L(t)$ must lie in $M\setminus U$, which is a relatively compact subset of $M_{reg}$ whose distance to the singular set is positive. Via the projection $\pi:M\to M/\fol$, $L(t)$ is projected to an integral curve of the vector field $\pi_*H$.
Since $(M\setminus U)/\fol$ is relatively compact, there exists a sequence of times $t_i$ going to infinity, such that $\pi(L(t_i))$ converges to some point $\pi(L')\in (\ol{M\setminus U})/\fol$.
Since $\log\mathrm{vol}(L(t))$ is decreasing, $\log\mathrm{vol}(L(t))>\log\mathrm{vol}(L')>c$ for some $c\in \RR$. On the other hand, from the previous result one has
\[
\frac{d}{dt}\Big(\log\mathrm{vol}(L(t))\Big)=H\Big(\log \mathrm{vol}\big(L(t)\big)\Big)=-\|H\|^2
\]
and since $\log\mathrm{vol}(L(t))$ is bounded from below, then (up to taking a subsequence) one has $\|H|_{L(t_i)}\|^2\to 0$. By the continuity of the mean curvature in $M_{reg}$, $H|_{L'}=0$ and therefore $L'$ is minimal. On the other hand, $L'$ is not a local maximum because it is obtained as a Hausdorff limit of leaves with bigger volume.
\end{proof}

\begin{proof}[Proof of Proposition \ref{P:polar-fol}] Suppose that there is a MCF $L(t)$ without a finite time singularity. By Lemma \ref{L:no-finite-time-singularity}, there exists a sequence of leaves $L_i$ converging to a minimal regular leaf $L'$, such that $\mathrm{vol}(L_i)>\mathrm{vol}(L')$. This will provide a contradiction with the following result, which will then finish the proof.

\begin{proposition}
Let $(M,\fol)$ be a polar foliation on a compact nonnegatively curved manifold. Then for every regular minimal leaf $L'$, there exists a tubular neighbourhood $U$ around $L'$ such that, for every leaf $L$ in $U$, $\mathrm{vol}(L)\leq \mathrm{vol}(L')$.
\end{proposition}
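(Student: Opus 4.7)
The plan is to represent every nearby leaf as the image of $L'$ under a parallel normal variation, and then to exploit concavity of the variation's Jacobian (which follows from nonnegative sectional curvature) together with the minimality of $L'$.

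To set up the parallel variation, I would choose a tubular neighbourhood $U$ of $L'$ small enough that $U\subset M_{reg}$, so that, for any leaf $L\subset U$, a minimising geodesic $\sigma:[0,1]\to U$ from some $p\in L'$ to a point $q\in L$ exists, is normal to $L'$ at $p$, and lies in the section $\Sigma$ through $p$ provided by polarity. The vector $\xi_0:=\sigma'(0)\in\nu_pL'$ extends, using the flatness of the normal connection on regular leaves of a polar foliation, to a parallel normal field $\xi$ on $L'$ with $\nabla^\perp\xi\equiv 0$. Shrinking $U$ if necessary, the map
\begin{equation*}
F_s:L'\to M,\qquad F_s(x)=\exp_x(s\xi(x)),
\end{equation*}
is, for every $s\in[0,1]$, a diffeomorphism of $L'$ onto the parallel leaf $L_s$, with $L_0=L'$ and $L_1=L$.

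Next, I would write $\mathrm{vol}(L_s)=\int_{L'}J(s,x)\,d\mathrm{vol}_{L'}(x)$, where $J(s,x)=|E_1(s)\wedge\cdots\wedge E_k(s)|$ is the Jacobian of $F_s$, $k:=\dim L'$, and $E_1,\ldots,E_k$ are the Jacobi fields along $\gamma_x(s)=F_s(x)$ with $E_i(0)=e_i$ an orthonormal basis of $T_xL'$ and $E_i'(0)=-A_{\xi(x)}e_i$ (the latter because $\nabla^\perp\xi=0$). A Riccati computation, using that the $E_i(s)$ stay tangent to $L_s$ since $F(\cdot,s)$ has image in $L_s$, gives
\begin{equation*}
\tfrac{d}{ds}\log J(s,x)=-\langle H_{L_s},\xi_s\rangle,\qquad \tfrac{d^{2}}{ds^{2}}\log J(s,x)=-|A_{\xi_s}|^{2}-\mathrm{Ric}_{L_s}(\xi_s,\xi_s),
\end{equation*}
where $\xi_s=\gamma_x'(s)$ and $\mathrm{Ric}_{L_s}(\xi_s,\xi_s)=\sum_{i}\sec_M(\tilde e_i(s),\xi_s)$ is the partial Ricci curvature of $M$ summed over an orthonormal basis $\tilde e_i(s)$ of $T_{\gamma_x(s)}L_s$. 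In nonnegative sectional curvature both terms on the right are non-positive, so $\log J(s,x)$ is concave in $s$.

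Finally, $\log J(0,x)=0$ and, by minimality of $L'$, $\tfrac{d}{ds}\log J(0,x)=-\langle H_{L'}(x),\xi(x)\rangle=0$; concavity then forces $\log J(s,x)\leq 0$, hence $J(s,x)\leq 1$, for every $s\in[0,1]$ and every $x\in L'$. Integrating over $L'$ gives $\mathrm{vol}(L)=\mathrm{vol}(L_1)\leq\mathrm{vol}(L')$, as required. The main obstacle is the setup: producing the parallel normal extension $\xi$ and verifying that $F_s$ is a diffeomorphism onto $L_s$ for $s\in[0,1]$, both of which rely on the standard structure theory of polar singular Riemannian foliations, in particular the flatness of the normal connection of a regular leaf and the fact that parallel translation of a normal vector along $L'$ carries leaves to parallel leaves.
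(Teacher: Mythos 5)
Your proof is correct and follows essentially the same strategy as the paper's: represent nearby leaves as holonomy/parallel images of $L'$, write $\mathrm{vol}(L)=\int_{L'}J(s,x)\,d\mathrm{vol}_{L'}$, and bound the Jacobian $J$ above by $1$ using the Riccati equation for the shape operator $S(s)=-A_{\gamma'(s)}$ together with nonnegative curvature and minimality of $L'$. The only cosmetic difference is in the final step: you take the trace of the Riccati equation and conclude directly that $\log J$ is concave with vanishing derivative at $s=0$, whereas the paper invokes Eschenburg's matrix Riccati comparison to bound $j_s$ by the explicit Euclidean comparison Jacobian $\ol{j}_s(q)=\prod_i(1-\lambda_i(q)s)$ and then analyzes that polynomial (which of course is also log-concave with a critical point at $0$ precisely because $\sum_i\lambda_i=\tr A_{X(q)}=0$). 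Your route is a bit more streamlined since it avoids the intermediate comparison object, but the two arguments rest on the same computation; the paper's version has the small advantage of producing an explicit quantitative radius $c=1/\|A\|_\infty$ for the tube $U$, while you instead appeal to compactness of $L'$ to keep $F_s$ a diffeomorphism, which is also fine for the purpose of the proposition.
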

\begin{proof}
Fixing a unit-length, basic vector field $X$ along $L'$ and a point $p\in L'$ let $\gamma_X(s)$ denote the geodesic starting at $p$ with initial velocity $X(p)$. We set
\[
\delta(X)=\sup\{s\mid \mathrm{vol}(L_{\gamma_X(s)}\leq \mathrm{vol}(L')\}.
\]
In order to prove the Proposition, it is enough to show that $\delta(X)>c>0$ for some $c$ not depending on $X$.

Let $e_1,\ldots e_n$ be an orthonormal frame of $T_pL'$, let $E_1(s),\ldots E_n(s)\in T_{\gamma_X(s)}L_{\gamma_X(s)}$ be the extension of $e_1,\ldots e_n$ along $\gamma_X(s)$ by (vertical) parallel transport, which allow us to identify the tangent spaces $T_{\gamma_X(s)}L_{\gamma_X(s)}$ with $T_pL'$. Moreover, let $\omega_s(p)=E_1^*(s)\wedge\cdots \wedge E_n^*(s)$ denote the volume forms of $L_{\gamma_X(s)}$ at $\gamma_X(s)$.

The \emph{holonomy map} $f_s:L'\to L_{\gamma_X(s)}$ defined by $f_s(q)=\exp_{q}sX(q)$ is a well defined, smooth diffeomorphism between $L'$ and $L_{\gamma_X(s)}$, whose differential at a point $q$ is given by ${f_s}_*(e_i)=J_i(s)$, where $J_i$ is the unique \emph{holonomy Jacobi field} starting at $q$ with $J_i(0)=e_i$ (cf. \cite[Sect. 1.4]{GromollWalschap} for the definition and properties of holonomy Jacobi fields).

The volume function along $\gamma_X(s)$ then reads
\[
\mathrm{vol}(L_{\gamma_X(s)})=\int_{L_{\gamma_X(s)}}\omega_s=\int_{L'}f^*_s\omega_s=\int_{L'}j_s(q)\omega
\]
where $j_s(q)=\det(J_1(s),\ldots J_n(s))$.
Since the curvature is nonnegative and the foliation is polar, by standard comparison theory (cf. \cite{Eschemburg}), $j_s(q)$ is bounded above by a corresponding function $\ol{j}_s(q)$ in Euclidean space. In other words, let $\ol{S}_q:[0,b]\to \Sym^2(T_qL')$ be the tensor satisfying 
\[
\ol{S}_q'+\ol{S}_q^2=0,\qquad \ol{S}_q(0)=-A_{X(q)}
\]
and let $\ol{j}_s(q)$ be the function such that
\[
\frac{d}{ds}\ol{j}_s(q)=\ol{j}_s(q)\cdot\tr(\ol{S}_q(s)), \qquad \ol{j}_0(q)=1.
\]
Then $j_s(q)\leq \ol{j}_s(q)$. It is easy to compute $\ol{j}_s(q)$:
\[
\ol{j}_s(q)=(-1)^n \left(\det A_{X(q)}\right) \prod_i (s-\lambda_i(q)^{-1})
\]

where $\lambda_1(q),\ldots \lambda_n(q)$ are the eigenvalues of $A_{X(q)}$. Such a function has a local maximum at $0$, where $\ol{j}_0(q)=j_0(q)=1$. Moreover, this is a maximum in the interval $\left[\frac{1}{ \lambda^-(q)},\frac{1}{\lambda^+(q)}\right]$, where $\lambda^-(q)$ is the smallest (negative) eigenvalue of $A_{X(q)}$ and $\lambda^+(q)$ is the biggest (positive) eigenvalue. In particular, if $\lambda^+_X=\max_{q}\lambda^+(q)$, then $j_s(q)\leq \ol{j}_s(q)\leq 1$ for all $q\in L'$ and $s\in \left[0,\frac{1}{\lambda^+_X}\right]$, and therefore
\[
\mathrm{vol}(L_{\gamma_X(s)})=\int_{L'}j_s(q)\omega\leq \int_{L'}\omega=\mathrm{vol}(L')\qquad \forall s\in [0,1/\lambda^+_X].
\]
Therefore, $\delta(X)\geq 1/\lambda^+_X$. By letting $c=1/\|A\|_{\infty}$, we then have $\delta(X)>1/\|A\|_{\infty}>0$ for every $X$.
\end{proof}
\end{proof}


\bibliographystyle{amsplain}

\end{document}